\documentclass{amsart}
\usepackage{tikz}
\usepackage{hyperref}   
\usepackage{amssymb,xcolor}
\usepackage{stmaryrd} 

\usepackage{mathtools}  
\mathtoolsset{showonlyrefs}

\theoremstyle{plain}
\newtheorem{corollary}{Corollary}
\theoremstyle{plain}
\newtheorem{theorem}{Theorem}[section]
\theoremstyle{plain}
\newtheorem{definition}[theorem]{Definition}
\theoremstyle{plain}
\newtheorem{lemma}[theorem]{Lemma}
\theoremstyle{remark}
\newtheorem{remark}[theorem]{Remark}
\theoremstyle{plain}
\newtheorem{proposition}[theorem]{Proposition}
\numberwithin{equation}{section}
\theoremstyle{plain}

\begin{document}

\title[Curvature estimate of stable free boundary MOTS]{A curvature estimate
for stable marginally outer trapped hypersurface with a free boundary}

\author{Xiaoxiang Chai}
\address{Korea Institute for Advanced Study, Seoul 02455, South Korea}
\email{xxchai@kias.re.kr}

\begin{abstract}
  A marginally outer trapped hypersurface is a generalization of minimal
  hypersurfaces originated from general relativity. We show a curvature
  estimate for stable marginally outer trapped hypersurfaces up to the free
  boundary satisfying a uniform area bound. Our proof is based on an iteration
  argument. The curvature estimate was previously known via a blowup argument
  for stable minimal hypersurfaces.
\end{abstract}

\subjclass{53C21, 53C42.}

\keywords{marginally outer trapped surface, free boundary, stability, minimal
hypersurface, null expansion, curvature estimate, initial data set.}

{\maketitle}

\section{Introduction}

Let $M^n$ be a spacelike submanifold in $\mathcal{S}^{n + 1, 1}$ and $l^{\pm}$
be the two independent future directed null sections of the normal bundle of
$M$ with corresponding null second fundamental form or shear tensor
$\chi^{\pm}$. The traces of $\chi^{\pm}$ are called the null expansions which
we denote them by $\theta^{\pm}$.

\begin{definition}
  The submanifold $M$ is called marginally outer (inner) trapped if
  \begin{equation}
    \theta^{\pm} = 0. \label{mots}
  \end{equation}
  We call $M$ a MOTS (MITS) in short.
\end{definition}

We will only consider the case when $M$ sits in a spacelike hypersurface $N^{n
+ 1} \subset \mathcal{S}^{n + 1, 1}$. Let $\tau$ be the future timelike normal
of $N$, $\hat{g}$ be the induced metric on $N$ and $p$ the second fundamental
form of $N$ with respect to $\tau$ in $\mathcal{S}^{n + 1, 1}$. The triple
$(N^{n + 1}, \hat{g}, p)$ is usually refered as \text{{\itshape{an initial
data set}}}. A rather natural choice of $l^{\pm}$ in this situation is $\nu
\pm \tau$ where $\nu$ is the outward pointing normal of $M$ in $N$. Let $e_i$
be an orthonormal frame of the tangent space of $M$, then
\begin{equation}
  \chi_{\pm} = \sum_{i = 1}^n \langle \nabla_{e_i} (\nu \pm \tau), e_i
  \rangle,
\end{equation}
an we have the null expansion is given by
\begin{equation}
  \theta^{\pm} = H \pm \ensuremath{\operatorname{tr}}_M p, \label{mots in
  spacelike hypersurface}
\end{equation}
where $H$ is the mean curvature of $M$ in $N$ and
$\ensuremath{\operatorname{tr}}_M p$ is the trace of the projection of $p$ to
$M$.

The MOTS equation \eqref{mots} is then a prescribed mean curvature equation.
When $N$ is a time-symmetric Cauchy hypersurface in $\mathcal{S}^{n + 1, 1}$
i.e. $p \equiv 0$, then the a MOTS is simply minimal in $N$. The concept of
stability is central in the theory of MOTS which extends the stability of
stable minimal hypersurfaces.

\begin{definition}[{\cite{andersson-local-2005}}]
  We say that a MOTS $M$ is stable if there is an outward infinitesimal
  deformation which does not decrease $\theta^+$. To write the notion
  analytically, there exists a non-zero function $f \geqslant 0$ such that
  \begin{equation}
    \delta_{f \nu} \theta^+ \label{mots regrouped stability} \geqslant 0.
  \end{equation}
\end{definition}

For the construction of marginally outer trapped surface, see
{\cite{eichmair-plateau-2009,andersson-area-2009,an-emergence-2020,an-anisotropic-2021,roesch-mean-2022}}.
The notion of a free boundary (more generally, capillary) MOTS as well as the
stability was introduced by Alaee-Lesourd-Yau {\cite{alaee-stable-2020}}.
Recall the definition of a free boundary stable marginally outer trapped
hypersurface and its stability.

\begin{definition}[{\cite{alaee-stable-2020}}]
  A hypersurface $M$ with $\partial M \neq \emptyset$ is said to be a free
  boundary marginally outer trapped hypersurface if $\theta^+ = 0$ and
  $\partial M$ meets $\partial N$ orthogonally. It is said to be stable if
  there exists a non-zero $f \geqslant 0$ such that
  \begin{equation}
    \delta_{f \nu} \theta^+ \geqslant 0 \text{ in } M, \delta_{f \nu} \langle
    \eta, \nu \rangle = 0 \text{ on } \partial M, \label{stability of free
    boundary mots}
  \end{equation}
  where $\eta$ be the outward normal of $\partial N$ in $N$.
\end{definition}

The equation \eqref{stability of free boundary mots} can be written down in
terms of inequalities for $f$ as done in Lemma \ref{first variation of null
expansion}. The angle forming by the two vector fields $\eta$ and $\nu$ is
call the contact angle of $\partial M$ and $\partial N$. Geometrically, the
infinitesimal deformation in \eqref{stability of free boundary mots} fixes the
contact angle of $\partial M$ and $\partial N$.

To simplify, we assume that $N$ lies in a larger manifold $\tilde{N}$ of the
same dimension with boundary such that any point $x \in \partial N$ there
exists a geodesic ball $B (x, r) \subset \tilde{N}$. To define the geodesic
balls centered at the boundary point is easier than the balls defined via the
Fermi coordinates (see {\cite[Appendix A]{li-min-max-2021}}).

Now we state our main result which is the following curvature estimate for
stable marginally outer trapped hypersurface assuming a volume bound.

\begin{theorem}
  \label{main curvature estimate}Let $2 \leqslant n \leqslant 5$, if $M^n$ is
  a stable free boundary marginally outer trapped hypersurface in $(N,
  \hat{g}, p)$ satisfying a uniform volume bound
  \begin{equation}
    \ensuremath{\operatorname{vol}} (B (x, r) \cap M) \leqslant C_M r^n,
    \label{volume growth}
  \end{equation}
  for all $x \in \bar{M}$ and all $0 < r \leqslant r_0$, then there exists a
  bound on the curvature
  \begin{equation}
    |A| (x) \leqslant \tfrac{C}{r} \label{curvature estimate}
  \end{equation}
  with $C$ depending only on $C_M$, $|p|_{C^1}$,
  $|\ensuremath{\operatorname{Rm}}|_{C^0}$, $\ensuremath{\operatorname{inj}}
  (\tilde{N}, g)^{- 1}$ and $|d|_{C^3}$. Here $\ensuremath{\operatorname{Rm}}$
  is the curvature operator of $\tilde{N}$, $d$ is the distance function to
  $\partial N$ in $\tilde{N}$ and $\ensuremath{\operatorname{inj}} (\tilde{N},
  g)$ is the injective radius of $\tilde{N}$.
\end{theorem}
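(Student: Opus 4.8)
The plan is to adapt the Schoen--Simon--Yau iteration scheme for stable minimal hypersurfaces to the non-self-adjoint MOTS stability operator and to the presence of a free boundary. Everything in the interior is classical; the new work is at $\partial M$, and I expect that to be the main obstacle.

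\emph{A symmetrized stability inequality.} By Lemma~\ref{first variation of null expansion}, the linearization $f\mapsto\delta_{f\nu}\theta^+$ has the form $Lf=-\Delta f+2\langle X,\nabla f\rangle+(Q-|X|^2+\operatorname{div}X)f$ for a vector field $X$ (built from the normal-tangential part of $p$) and a function $Q$ built from $p$, $\nabla p$ and the ambient curvature, while the boundary part $\delta_{f\nu}\langle\eta,\nu\rangle=0$ becomes a Robin condition $\partial_\eta f=\langle\mathrm{II}_{\partial N}(\nu),\nu\rangle f$ on $\partial M$. Existence of a positive $f$ with $Lf\geq0$ and this boundary condition forces the principal eigenvalue of the symmetrized problem to be nonnegative; substituting $\phi^2/f$ (equivalently, the logarithmic substitution $u=\log f$) and using $\chi^+=A+p|_M$, so that $|A|^2\leq2|\chi^+|^2+2|p|_{C^0}^2$, gives for all $\phi\in C^\infty(\bar M)$
\begin{equation}
  \int_M|A|^2\phi^2\,d\mu\leq C\int_M\bigl(|\nabla\phi|^2+\phi^2\bigr)d\mu+C\int_{\partial M}\phi^2\,d\sigma,
\end{equation}
with $C=C(|p|_{C^1},|\operatorname{Rm}|_{C^0},|d|_{C^2})$.

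\emph{A Simons inequality and the iteration.} Since $\theta^+=0$ is the prescribed mean curvature equation $H=-\operatorname{tr}_Mp$, the Codazzi tensor $\nabla A$ is symmetric up to terms controlled by $|\operatorname{Rm}|_{C^0}$ and $|p|_{C^1}$, so the improved Kato inequality $|\nabla A|^2\geq(1+\tfrac2n)|\nabla|A||^2$ holds up to such errors, and a Simons computation yields, where $|A|>0$,
\begin{equation}
  |A|\,\Delta|A|\geq\tfrac2n|\nabla|A||^2-c_1|A|^4-c_2\bigl(|A|^2+|A|+1\bigr).
\end{equation}
Working with $u:=(|A|^2+k^2)^{1/2}$ for suitable $k=k(|p|_{C^1},|\operatorname{Rm}|_{C^0})$ removes the $|A|^{-1}$ term and makes $u^{1+q}$ a weak subsolution of a semilinear equation with cubic nonlinearity for a range of $q\geq0$. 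Testing the stability inequality with $\phi=u^{1+q}\eta$, $\eta$ a cutoff, and feeding in this subsolution property to absorb the gradient terms (the Schoen--Simon--Yau lemma) produces reverse-H\"older inequalities that iterate to a bound on $\|u\|_{L^p_{\mathrm{loc}}(M)}$ for some $p>n$ --- this is the step that restricts to $2\leq n\leq5$, the same threshold as in the Bernstein theorem of Schoen--Simon--Yau. With $u\in L^p$, $p>n$, one applies a version of the Michael--Simon Sobolev inequality valid up to the free boundary (legitimate here because $|H|=|\operatorname{tr}_Mp|$ is bounded and $\partial M$ meets $\partial N$ orthogonally), and runs De Giorgi--Nash--Moser iteration on the subsolution $u$, using \eqref{volume growth} to keep the iteration constants scale-invariant, to obtain $\sup_{B(x,r/2)\cap M}u\leq Cr^{-n/p}\|u\|_{L^p(B(x,r)\cap M)}$; a rescaling and covering argument converts this into \eqref{curvature estimate}.

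\emph{Main obstacle.} The difficulty is that each ingredient above must be made to work uniformly up to $\partial M$. The boundary integral in the stability inequality has to be absorbed by a trace interpolation $\int_{\partial M}\phi^2\,d\sigma\leq\varepsilon\int_M|\nabla\phi|^2\,d\mu+C_\varepsilon\int_M\phi^2\,d\mu$ with constants controlled by the admissible data; and the integrations by parts near $\partial M$ in the iteration require a bound on the normal derivative of $A$ along $\partial M$, obtained by differentiating the orthogonality condition $\langle\eta,\nu\rangle=0$ together with the MOTS equation. Both are most cleanly handled by using the distance function $d$ to $\partial N$ to set up Fermi-type coordinates and to reflect $M$ across $\partial N$, turning $M$ into an approximate MOTS without boundary whose defects in the MOTS equation, the Simons inequality and the stability inequality are all controlled by $|d|_{C^3}$. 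This is the origin of the $C^3$-dependence of $C$, and verifying that the reflected surface is regular enough to run the iteration and that these defects are genuinely of lower order is the delicate point.
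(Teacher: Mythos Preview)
Your stability step and the trace-to-interior trick via $\operatorname{div}_M(\phi^2\nabla d)$ match the paper (Corollary~\ref{l2 estimate no boundary}); the gap is in the boundary control of $A$ during the Simons/SSY step, where the paper's mechanism is different from both of the ones you suggest.

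The paper does \emph{not} reflect across $\partial N$. It runs the entire Simons--Kato--SSY iteration on a \emph{perturbed} second fundamental form $\bar h_{ij}=h_{ij}+T_{ij\nu}+\Lambda_0 g_{ij}$, where $T(X,Y,Z)=b(X,Z)\langle Y,\eta\rangle+b(Y,Z)\langle X,\eta\rangle$ is Edelen's tensor built from the second fundamental form $b$ of $\partial N$ (Definition~\ref{perturbation}). The point of the perturbation is that the free-boundary identity $h(\eta,X)=-b(\nu,X)$ forces $\bar h(\eta,X)=0$ along $\partial M$ for every $X$ tangent to $\partial M$, so in $\partial_\eta|\bar A|^2=2\sum_{i,j}\bar h_{ij}\nabla_\eta\bar h_{ij}$ the mixed terms $\bar h_{1i}\nabla_\eta\bar h_{1i}$ drop out. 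The surviving entries $\nabla_\eta\bar h_{11}$ and $\nabla_\eta\bar h_{ij}$ ($i,j\geq 2$) are computable from Codazzi and orthogonality (Proposition~\ref{pp h k relation}), giving $|\partial_\eta|\bar A||\leq c|\bar A|$ (Corollary~\ref{boundary derivative bound}); every later boundary integral is then pushed back inside by divergence against $\nabla d$, and the De~Giorgi iteration is run on $u=|\bar A|^2$.

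Your two alternatives hit exactly the obstruction the paper flags. Differentiating the orthogonality condition and the MOTS equation yields $\nabla_\eta h_{11}$ and $\nabla_\eta h_{ij}$ for tangential $i,j$, but \emph{not} $\nabla_\eta h_{1i}$; since $h_{1i}=-b(\nu,e_i)$ is generically nonzero, that uncontrolled derivative appears in $\partial_\eta|A|^2$, and the paper remarks (just after Corollary~\ref{boundary derivative bound}) that precisely this term is why $A$ and the shear $\chi$ are abandoned in favor of $\bar A$. As for reflection: the doubled hypersurface is at best $C^{2,\alpha}$ across $\partial N$ with reflected data $(\hat g,p)$ only $C^{1,\alpha}$, one derivative short of what the Simons identity uses; reflection is what drives the blow-up proof of Guang--Li--Zhou that the paper cites as prior art, but it is not adapted to a direct iteration with constants depending explicitly on the ambient data, which is the stated aim here.
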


\begin{remark}
  The curvature estimate holds as well if we replace the extrinsic balls with
  intrinsic balls in \eqref{volume growth}. The dependence on $|d|_{C^3}$ is
  actually dependence on $|b|_{C^1}$ where $b$ is the second fundamental form
  of the boundary $\partial N$ in $N$. We write in this way because $b$ is
  simply the Hessian of $d$.
\end{remark}

Guang-Li-Zhou {\cite{guang-curvature-2020}} showed a curvature estimate for
stable minimal hypersurfaces using blow-up argument. Our approach involves
deriving a Simons inequality for a perturbation of the second fundamental form
(see Definition \ref{perturbation}), combining with the stability
\eqref{stability of free boundary mots} for the stable free boundary MOTS. It
is in spirit closer to the works Schoen-Simon-Yau
{\cite{schoen-curvature-1975}} and Andersson-Metzger
{\cite{andersson-curvature-2010}}. Compared to the work
{\cite{guang-curvature-2020}}, the constant in our curvature estimate
\eqref{curvature estimate} explicitly depends on the geometric quantities of
$N$, $\partial N$ and the volume bound. The technical difference between our
work and {\cite{andersson-curvature-2010}} is that we do not consider shear
tensor $\chi^{\pm}$. The reason lies in the presence of the free boundary.

Schoen-Simon {\cite{schoen-regularity-1981}} generalized the curvature
estimate {\cite{schoen-curvature-1975}} for embeded stable minimal
hypersurfaces to any dimension. However, Schoen-Simon theory for high
dimensional stable free boundary minimal hypersurfaces is not seen in
literature yet.

We would also like to mention another important problem: the curvature
estimate for immersed stable minimal or marginally outer trapped surfaces with
a free boundary (see {\cite[Conjecture 1.4]{guang-curvature-2020}}) without
area bound. This is the free boundary analog of Schoen's curvature estimate
{\cite{schoen-estimates-1983,colding-estimates-2002}}. It is also quite
interesting to seek a curvature estimate for immersed stable capillary
surfaces in an arbitrary manifold with boundary.

The paper is organized as follows:

In Section \ref{prelim}, we collect basics on Simons identity, the
perturbation of the second fundamental form and its boundary derivatives. In
Section \ref{stability section}, we derive some integral estimates only by the
stability condition \eqref{stability of free boundary mots}. In Section
\ref{simons section}, we calculate a Simons inequality for the perturbation of
the second fundamental form. In Section \ref{curvature estimate section}, via
a de Giorgi iteration, we conclude the proof for the pointwise curvature
estimate. In the Appendix \ref{app:Sobolev}, we record the Sobolev inequality
on free boundary hypersurfaces.

\

\text{{\bfseries{Acknowledgments}}} I would like to thank Sven Hirsch (Duke),
Martin Lesourd (Harvard) and Martin Li (CUHK) for discussions. My research is
supported by the KIAS research grant under the code MG074402.

\section{Preliminaries}\label{prelim}

First, we fix some notations used in this article. Let $K$ be the Riemann
curvature operator of $N$, $R$ be that of $M$, $\hat{\nabla}$ be the
Levi-Civita connection of $(N, \hat{g})$, $D$ be the induced connection on
$\partial N$, and $\nabla$ the induced connection on $M$.

We collect a few facts which would be used frequently later in the work.

\subsection{Simons identity}

We recall the Simons' identity.

\begin{theorem}
  \label{simons}For any hypersurface $M$ in $N$ the second fundamental form
  $h_{i j}$ satisfies the identity
\begin{align}
& \Delta h_{i j} \\
= & \nabla_i \nabla_j H - \nabla_j K_{n + 1, k i k} - \nabla_k K_{n + 1, i
j k} \\
& - K_{k j i l} h_{k l} - K_{k j k l} h_{i l} - |A|^2 h_{i j} + h_{i l}
h_{j l} H. \label{ssy 1.20}
\end{align}
\end{theorem}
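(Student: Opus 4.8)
The plan is to derive \eqref{ssy 1.20} purely from the structure equations of the immersion $M \subset N$, namely the Gauss and Codazzi equations, together with the Ricci commutation identity on $M$ and the second Bianchi identity for the ambient curvature. Fix a local orthonormal frame $e_1,\dots,e_n$ for $TM$, let $e_{n+1}$ be the unit normal, and adopt the convention that $K_{n+1,ijk}$ denotes the mixed component $\langle K(e_i,e_j)e_k,e_{n+1}\rangle$ of the curvature operator of $N$. The two inputs are the Codazzi equation, which in this notation reads $\nabla_k h_{ij}-\nabla_i h_{kj}=K_{n+1,jik}$, and the Gauss equation $R_{ijkl}=K_{ijkl}+h_{ik}h_{jl}-h_{il}h_{jk}$. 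Tracing the Codazzi equation over two indices gives the contracted form $\nabla_k h_{ik}=\nabla_i H+K_{n+1,kik}$.

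First I would rewrite Codazzi as $\nabla_k h_{ij}=\nabla_i h_{kj}+K_{n+1,jik}$, apply $\nabla_k$, and sum over $k$, obtaining
\begin{equation}
  \Delta h_{ij}=\nabla_k\nabla_i h_{kj}+\nabla_k K_{n+1,jik}.
\end{equation}
Next I would commute the derivatives in the first term by the Ricci identity applied to the $(0,2)$-tensor $h$, which produces $\nabla_i\nabla_k h_{kj}$ plus two contractions of the intrinsic curvature $R$ of $M$ with $h$. I would then substitute the contracted Codazzi equation into $\nabla_k h_{kj}$, so that $\nabla_i\nabla_k h_{kj}=\nabla_i\nabla_j H+\nabla_i K_{n+1,kjk}$; at this point every term is a second derivative of $H$, a derivative of the ambient curvature, or a contraction of $R$ with $h$.

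Finally I would eliminate $R$ using the Gauss equation: the two curvature-times-$h$ contractions split into ambient-curvature contractions $K_{kjil}h_{kl}$ and $K_{kjkl}h_{il}$, precisely the terms appearing in \eqref{ssy 1.20}, together with the quadratic terms $-|A|^2 h_{ij}$ and $h_{il}h_{jl}H$ coming from the $h\cdot h$ part of Gauss. An application of the second Bianchi identity to the ambient curvature reorganizes the first-derivative terms $\nabla_k K_{n+1,jik}$ and $\nabla_i K_{n+1,kjk}$ into $-\nabla_j K_{n+1,kik}-\nabla_k K_{n+1,ijk}$. Collecting everything and relabeling dummy indices gives \eqref{ssy 1.20}. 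The only genuine obstacle is bookkeeping: one must fix sign conventions and index orderings for $K$, for the Ricci commutation formula, and for the Bianchi identities at the outset and apply them consistently; there is no conceptual difficulty beyond that.
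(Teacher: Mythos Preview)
Your approach is essentially that of the paper: apply Codazzi, commute covariant derivatives via the Ricci identity, apply Codazzi a second time, and expand the intrinsic curvature through the Gauss equation. The only substantive difference is the order of index swaps: you move $k$ to $i$ in the first Codazzi step, whereas the paper moves $k$ to $j$. As a result your commutation produces $R_{ki\cdot\cdot}$ terms and a derivative term $\nabla_i K_{n+1,kjk}$, while the statement carries $K_{kj\cdot\cdot}$ and $\nabla_j K_{n+1,kik}$.

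You propose to reconcile these via the second Bianchi identity, but this step is both unnecessary and not quite correct as written: the operators $\nabla$ appearing in the formula are the intrinsic Levi--Civita connection of $M$, not the ambient $\hat\nabla$, so the ambient Bianchi identity for $K$ does not apply to $\nabla K$ without introducing additional $h\ast K$ correction terms. The cleaner route is exactly what the paper does: swap $k$ with $j$ first, so that commuting $\nabla_k\nabla_j$ already yields the $R_{kj\cdot\cdot}$ terms in the stated form, and the second Codazzi application then produces $\nabla_j K_{n+1,kik}$ directly. Equivalently, since $\Delta h_{ij}$ is symmetric in $i$ and $j$, you can simply relabel $i\leftrightarrow j$ at the end of your computation and obtain the stated identity with no appeal to Bianchi.
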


\begin{proof}
  The identity is due to {\cite{simons-minimal-1968}}, see also
  {\cite[(1.19)-(1.20)]{schoen-curvature-1975}}. In these papers the identity
  is not in the form that we will need, so we give a quick derivation for the
  convenience of the reader. We pick an orthonormal frame $\{e_i \}_{1
  \leqslant i \leqslant n}$ on $T M$. We use the Einstein summation convention
  where the summation is done on repeated indices.
  
  First, by Codazzi equation,
  \begin{equation}
    \Delta h_{i j} = \nabla_k \nabla_k h_{i j} = \nabla_k \nabla_j h_{i k} +
    \nabla_k K_{j k i, n + 1},
  \end{equation}
  By commuting covariant derivatives and the Gauss equation,
\begin{align}
& \nabla_k \nabla_j h_{i k} \\
= & \nabla_j \nabla_k h_{i k} - R_{k j i l} h_{k l} - R_{k j k l} h_{i l}
\\
= & \nabla_j \nabla_k h_{i k} - (K_{k j i l} + h_{j i} h_{k l} - h_{j l}
h_{i k}) h_{k l} - (K_{k j k l} + h_{j k} h_{k l} - h_{j l} h_{k k}) h_{i
l} \\
= & \nabla_j \nabla_k h_{i k} - (K_{k j i l} + h_{j i} h_{k l} - h_{j l}
h_{i k}) h_{k l} - (K_{k j k l} + h_{j k} h_{k l} - h_{j l} h_{k k}) h_{i
l} \\
= & \nabla_j \nabla_k h_{i k} - K_{k j i l} h_{k l} - K_{k j k l} h_{i l}
- |A|^2 h_{i j} + h_{i l} h_{j l} H.
\end{align}
  Applying the Codazzi equation on $\nabla_j \nabla_k h_{i k}$ again,
  \begin{equation}
    \nabla_j \nabla_k h_{i k} = \nabla_j \nabla_i h_{k k} - \nabla_j K_{i k k,
    n + 1} = \nabla_j \nabla_i H + \nabla_j K_{i k k, n + 1} .
  \end{equation}
  Collecting all the above we obtained the desired identity for $\Delta h_{i
  j}$.
\end{proof}

\subsection{Perturbed second fundamental form}

Next, we collect some facts about the perturbation we are going to use
throughout the paper.

\begin{definition}[{\cite{edelen-convexity-2016}}]
  \label{perturbation}Extend and fix $k$ and $\eta$ to be defined on all of
  $M$. Define the perturbed second fundamental form $\bar{A}$ of $M$ to be
  \begin{equation}
    \bar{h}_{i j} = h_{i j} + \bar{T}_{i j} : = h_{i j} + T_{i j \nu} +
    \Lambda_0 g_{i j}
  \end{equation}
  where $T$ is a 3-tensor define on $N$ by
  \begin{equation}
    T (X, Y, Z) = b (X, Z) g (Y, \eta) + b (Y, Z) g (X, \eta),
  \end{equation}
  and $D_0$ is a constant depending only on $|b|_{C^0}$ such that
  \[ T (X, X, \nu) + \Lambda_0 \geqslant 1 + |p|_{C^0} \label{D0 size} \]
  for any unit vector $X$. The perturbed null second fundamental form or shear
  tensor is given by
  \begin{equation}
    \bar{\chi}_{i j} = \bar{h}_{i j} + p_{i j} . \label{perturbed null second
    fundamental form}
  \end{equation}
\end{definition}

The most significant feature of the perturbed second fundamental form is that
$\bar{h} (\eta, \cdot)$ vanishes when restricted to $\partial M$ and that the
perturbed second fundamental form $\bar{h}$ is comparable with the original
second fundamental form and with good control of the boundary derivatives.
These are presented in the following two lemmas and Proposition \ref{pp h k
relation}.

\begin{lemma}
  \label{lemma:derivatives of T}Let $M$ be a hypersurface in $N$ not
  neccessarily a MOTS, $T$ be a 3-tensor on $N$, and $T_{i j \nu}$ be the
  2-tensor $T (\cdot, \cdot, \nu)$ restricted to $T M$, then
  \begin{equation}
    | \nabla T_{i j \nu} | \leqslant c_1 (1 + |A|) . \label{derivatives of T}
  \end{equation}
  The constant $c_1 > 0$ depends only on $|T|_{C^1}$. And for any 2-tensor $p$
  on $N$,
  \begin{equation}
    | \nabla p| \leqslant c_2 (1 + |A|), \label{derivatives of p}
  \end{equation}
  The constant $c_2 > 0$ depends on $|p|_{C^1}$.
\end{lemma}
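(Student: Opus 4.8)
The plan is to compute $\nabla T_{ij\nu}$ by expanding the tangential covariant derivative in terms of the ambient connection $\hat\nabla$ and the second fundamental form, and then bound each term. Fix an orthonormal frame $\{e_i\}$ on $TM$ and write $T_{ij\nu} = T(e_i, e_j, \nu)$. For the tangential derivative $\nabla_k T_{ij\nu}$, I would first relate $\nabla$ to $\hat\nabla$: since $T_{ij\nu}$ is the restriction to $TM$ of the ambient function $T(\cdot,\cdot,\nu)$, the Gauss formula gives $\nabla_k \bigl(T(e_i,e_j,\nu)\bigr) = (\hat\nabla_{e_k} T)(e_i,e_j,\nu) + T(\hat\nabla_{e_k}e_i, e_j,\nu) + T(e_i,\hat\nabla_{e_k}e_j,\nu) + T(e_i,e_j,\hat\nabla_{e_k}\nu)$, and the terms $\hat\nabla_{e_k}e_i = \nabla_k e_i - h_{ki}\nu$, $\hat\nabla_{e_k}e_j = \nabla_k e_j - h_{kj}\nu$, $\hat\nabla_{e_k}\nu = h_{kl}e_l$ (Weingarten) are the only places where curvature of $M$ enters. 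After the $\nabla_k e_i$, $\nabla_k e_j$ pieces are absorbed into the intrinsic derivative $\nabla_k T_{ij\nu}$, what remains is a sum of terms each of which is either $\hat\nabla T$ contracted with unit vectors, or $T$ contracted with one factor of $h$ and unit vectors.

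Consequently every surviving term is bounded pointwise either by $|T|_{C^1}$ (the $(\hat\nabla T)(e_i,e_j,\nu)$ term, which contributes the constant $1$) or by $|T|_{C^0}\,|A|$ (the three terms carrying a single factor of $h_{ki}$, $h_{kj}$, or $h_{kl}$). Summing over the bounded number of index combinations yields $|\nabla T_{ij\nu}| \leqslant c_1(1+|A|)$ with $c_1$ depending only on $|T|_{C^1}$, as claimed. The estimate \eqref{derivatives of p} for a $2$-tensor $p$ is proved identically, indeed more simply: $\nabla_k p_{ij} = (\hat\nabla_{e_k}p)(e_i,e_j) + p(\hat\nabla_{e_k}e_i,e_j) + p(e_i,\hat\nabla_{e_k}e_j)$, and the normal components of $\hat\nabla_{e_k}e_i$, $\hat\nabla_{e_k}e_j$ produce exactly the $p(\nu,e_j)h_{ki}$ and $p(e_i,\nu)h_{kj}$ terms bounded by $|p|_{C^0}|A|$, with the remaining term bounded by $|p|_{C^1}$; hence $c_2$ depends on $|p|_{C^1}$.

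The only point requiring a little care — and the one I would regard as the main (minor) obstacle — is the bookkeeping of which pieces of $\hat\nabla_{e_k}e_i$ get reabsorbed into the intrinsic $\nabla_k T_{ij\nu}$ versus which genuinely contribute: one must be careful that the frame need not be parallel and that $\nabla$ acting on the scalar $T_{ij\nu}$ is not the same as $\nabla$ acting on a genuine tensor on $M$ unless one is consistent about what "$T_{ij\nu}$" denotes (a family of scalars indexed by $i,j$ versus the $2$-tensor $T(\cdot,\cdot,\nu)|_{TM}$). Reading it as the latter — a $2$-tensor on $M$ obtained by restriction — makes the Gauss-formula computation clean, and then no intrinsic Christoffel symbols appear at all; the estimate follows immediately from the triangle inequality applied to the displayed expansion.
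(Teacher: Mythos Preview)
Your proposal is correct and follows essentially the same route as the paper: expand the derivative via the ambient connection, use the Gauss and Weingarten relations $\hat\nabla_{e_k}e_i = \nabla_k e_i - h_{ki}\nu$ and $\hat\nabla_{e_k}\nu = h_{kl}e_l$, and observe that each surviving term is bounded by $|T|_{C^1}$ or $|T|_{C^0}|A|$. The only cosmetic difference is that the paper sidesteps your bookkeeping discussion by choosing, at the outset, a local orthonormal frame with $\nabla_i e_j = 0$ at the point (invoking that the computation is tensorial), so the $\nabla_k e_i$ terms never appear; your more careful treatment of what gets reabsorbed is equivalent and arrives at the identical formula $(\nabla_k T)_{ij\nu} = (\hat\nabla_k T)_{ij\nu} - h_{ik}T_{\nu j\nu} - h_{jk}T_{i\nu\nu} + h_{kl}T_{ijl}$.
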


\begin{proof}
  The computations are tensorial, we may assume the simplifications that there
  is an orthonormal frame such that $\nabla_i e_j = 0$ at some point $x \in
  M$, and hence $\bar{\nabla}_i e_j = - h_{i j} \nu$ and $\bar{\nabla}_i \nu =
  h_{i j} e_j$ at $x$. With these simplications, we have
\begin{align}
& (\nabla_k T) (e_i, e_j, \nu) \\
= & \nabla_k (T (e_i, e_j, \nu)) \\
= & \bar{\nabla}_k (T (e_i, e_j, \nu)) \\
= & (\bar{\nabla}_k T) (e_i, e_j, \nu) + T (\bar{\nabla}_k e_i, e_j, \nu)
+ T (e_i, \bar{\nabla}_k e_j, \nu) + T (e_i, e_j, \bar{\nabla}_k \nu)
\\
= & (\bar{\nabla}_k T) (e_i, e_j, \nu) - h_{i k} T_{\nu j \nu} - h_{j k}
T_{i \nu \nu} + h_{k \ell} T_{i j \ell} .
\end{align}
  So \eqref{derivatives of T} follows with $c_1$ depending on $|T|_{C^1}$.
  Similar calculation gives
  \[ (\nabla_k p) (e_i, e_j) = (\bar{\nabla}_k p) (e_i, e_j) - h_{i k} p_{j
     \nu} - h_{j k} p_{i \nu} . \]
  The bound on $| \nabla p|$ then easily follows.
\end{proof}

\begin{remark}
  See also {\cite[Proposition 5.1]{edelen-convexity-2016}} for the computation
  of $\nabla^2 T_{i j \nu}$ in $\mathbb{R}^n$.
\end{remark}

\begin{lemma}
  For a marginally outer trapped hypersurface $M$,
  \begin{equation}
    | \bar{A} | \geqslant 1, |A| \leqslant c_1 + | \bar{A} |, | \nabla A|
    \leqslant c_2 (| \nabla \bar{A} | + | \bar{A} |) . \label{perturbed and
    original relation}
  \end{equation}
  Here the constant $c_1 > 0$ depends only on $|b|_{C^0}$ and$| \eta |_{C^0}$,
  and $c_2$ depends on $|b|_{C^1}$ and $| \eta |_{C^1}$.
\end{lemma}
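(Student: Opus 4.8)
The plan is to prove the three inequalities in \eqref{perturbed and original relation} directly from the definition of $\bar h$, using only elementary tensor algebra together with Lemma \ref{lemma:derivatives of T}. Recall $\bar h_{ij} = h_{ij} + T_{ij\nu} + \Lambda_0 g_{ij}$, and that for a MOTS we have $H = -\operatorname{tr}_M p$ by \eqref{mots in spacelike hypersurface}, so $|H| \leqslant n|p|_{C^0}$.

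\textbf{Lower bound $|\bar A| \geqslant 1$.} The key point is to use the trace condition on $\Lambda_0$, namely $T(X,X,\nu) + \Lambda_0 \geqslant 1 + |p|_{C^0}$ for every unit vector $X$. I would diagonalize $h$ at a point with eigenvalues $\kappa_1,\dots,\kappa_n$; then $\bar h$ evaluated on the $i$-th eigendirection $e_i$ is $\kappa_i + T_{e_i e_i \nu} + \Lambda_0$. The mean curvature being small (bounded by $|H|\leqslant n|p|_{C^0}$) forces at least one eigenvalue $\kappa_j$ to be $\geqslant -|p|_{C^0}$ (if all were less than $-|p|_{C^0}$ their sum $H$ would be less than $-n|p|_{C^0}$, a contradiction). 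For that index $j$,
\begin{equation}
\bar h_{jj} = \kappa_j + T_{e_j e_j \nu} + \Lambda_0 \geqslant -|p|_{C^0} + (1 + |p|_{C^0}) = 1,
\end{equation}
so $|\bar A|^2 \geqslant \bar h_{jj}^2 \geqslant 1$, giving $|\bar A|\geqslant 1$.

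\textbf{Comparison $|A| \leqslant c_1 + |\bar A|$.} Since $\bar T_{ij} = T_{ij\nu} + \Lambda_0 g_{ij}$ is a tensor built algebraically from $b$, $\eta$, $\nu$ and a constant $\Lambda_0$ determined by $|b|_{C^0}$, its pointwise norm is bounded by a constant $c_1$ depending only on $|b|_{C^0}$ and $|\eta|_{C^0}$; then $|A| = |h| \leqslant |\bar h| + |\bar T| \leqslant |\bar A| + c_1$ by the triangle inequality. For the gradient estimate $|\nabla A| \leqslant c_2(|\nabla \bar A| + |\bar A|)$, I would write $\nabla h = \nabla \bar h - \nabla \bar T = \nabla \bar h - \nabla T_{ij\nu}$ (the $\Lambda_0 g_{ij}$ term is parallel), and invoke \eqref{derivatives of T} to get $|\nabla T_{ij\nu}| \leqslant c_1(1 + |A|)$. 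Combining with the already-established $|A| \leqslant c_1 + |\bar A|$ and $|\bar A| \geqslant 1$ (so that the constant $1$ can be absorbed into $|\bar A|$), one obtains $|\nabla A| \leqslant |\nabla \bar A| + c_1(1 + |A|) \leqslant |\nabla\bar A| + c_1(1 + c_1 + |\bar A|) \leqslant c_2(|\nabla \bar A| + |\bar A|)$ with $c_2$ depending on $|b|_{C^1}$ and $|\eta|_{C^1}$.

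The only genuinely substantive step is the lower bound $|\bar A|\geqslant 1$, and there the crux is the pigeonhole observation that the MOTS condition bounds $H$, hence prevents all principal curvatures from being very negative, so that the shift by $T(\cdot,\cdot,\nu)+\Lambda_0$ pushes at least one diagonal entry of $\bar h$ above $1$. Everything else is routine: the comparison of norms is the triangle inequality applied to a bounded perturbation tensor, and the gradient comparison is a direct substitution into Lemma \ref{lemma:derivatives of T} followed by absorbing lower-order terms using $|\bar A|\geqslant 1$. I would take care that the constant $\Lambda_0$ is chosen uniformly (it is, by hypothesis, a function of $|b|_{C^0}$ alone), so all constants have the claimed dependencies.
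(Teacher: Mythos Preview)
Your proof is correct. For the second and third inequalities your argument is identical to the paper's. For the lower bound $|\bar A|\geqslant 1$, you use the same two ingredients as the paper---the MOTS bound $|H|\leqslant n|p|_{C^0}$ and the condition $T(X,X,\nu)+\Lambda_0\geqslant 1+|p|_{C^0}$---but combine them differently: you diagonalize $h$ and pigeonhole on eigenvalues to find one diagonal entry $\bar h_{jj}\geqslant 1$, whereas the paper simply sums the pointwise condition over an orthonormal frame to obtain $\bar H \geqslant H + n(1+|p|_{C^0}) \geqslant n$, and then concludes $|\bar A|\geqslant |\bar H|/n\geqslant 1$. The trace argument is marginally more direct (no diagonalization needed), but both are elementary and equivalent in strength.
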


\begin{proof}
  First by \eqref{D0 size},
  \begin{equation}
    \bar{H} \geqslant H + n + n |p|_{C^0} \geqslant n.
  \end{equation}
  So $| \bar{A} | \geqslant \tfrac{| \bar{H} |}{n} \geqslant 1$,
  \begin{equation}
    |A| \leqslant | \bar{A} | + | \bar{T} | \leqslant | \bar{A} | + c_1
  \end{equation}
  and
  \begin{equation}
    | \nabla A| \leqslant | \nabla \bar{A} | + | \nabla \bar{T} | \leqslant |
    \nabla \bar{A} | + c (1 + |A|) \leqslant c_2 (| \nabla \bar{A} | + |
    \bar{A} |) .
  \end{equation}
  The dependence of constants $c_1$ and $c_2$ are easy to track.
\end{proof}

\subsection{Boundary derivatives}

Let $\partial_i$ and $\partial_j$ are coordinate vector fields on $M$ such
that $\eta = \partial_1$ along $\partial M$. We calculate a few important
boundary derivatives of the second fundamental form namely $\nabla_1 h_{11}$
and $\nabla_1 h_{i j}$ where $i, j > 2$.

\begin{proposition}
  \label{pp h k relation}Let $X$ and $Y$ be tangent vector fields to $\partial
  M$, then
  \begin{equation}
    h (\eta, X) = - b (\nu, X) \label{h k relation} .
  \end{equation}
  The normal derivatives of $h$ satisfies
  \begin{equation}
    (\nabla_{\eta} h) (\eta, \eta) = \nabla_{\eta} H
    -\ensuremath{\operatorname{tr}}_{\partial M} ((\nabla_{\eta} h) (\cdot,
    \cdot)) . \label{boundary derivative of h 11}
  \end{equation}
  Let $D$ be the induced connection on $\partial N$, then
\begin{align}
& (\nabla_{\eta} h) (X, Y) \\
= & - K (Y, \eta, X, \nu) - K (Y, \nu, X, \eta) \label{boundary derivative
of h ij} \\
& - h (\nabla_Y \eta, X) - b (D_X \nu, Y) - (\nabla_{\nu} b) (X, Y)
\\
& + b (X, Y) h_{\eta \eta} + A (X, Y) b_{\nu \nu} .
\end{align}
\end{proposition}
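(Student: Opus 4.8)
The plan is to work in a coordinate frame adapted to $\partial M$, with $\partial_1=\eta$ along $\partial M$ and $\partial_2,\dots,\partial_n$ tangent to $\partial M$, and to extract each of the three identities from the appropriate structural equation (ambient second fundamental form relation, trace of Codazzi, and the full Codazzi identity), keeping careful track of which terms survive because $\partial M$ meets $\partial N$ orthogonally.

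First I would prove \eqref{h k relation}. Since $M$ meets $\partial N$ orthogonally, along $\partial M$ the outward normal $\eta$ of $\partial N$ is tangent to $M$ and the unit normal $\nu$ of $M$ is tangent to $\partial N$. For a tangent field $X$ to $\partial M$ one computes $h(\eta,X)=\langle\hat\nabla_X\eta,\nu\rangle$ using that $\eta$ is tangent to $M$; on the other hand $b(\nu,X)=\langle\hat\nabla_X\nu,\eta\rangle$ since $\nu$ is tangent to $\partial N$. Because $\langle\eta,\nu\rangle\equiv 0$ along $\partial M$ and $X$ is tangent to $\partial M$, differentiating gives $\langle\hat\nabla_X\eta,\nu\rangle+\langle\eta,\hat\nabla_X\nu\rangle=0$, which is exactly \eqref{h k relation}.

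Next, \eqref{boundary derivative of h 11} is purely algebraic: at a point of $\partial M$ expand $\nabla_\eta H=\nabla_\eta(\operatorname{tr} h)=g^{ij}(\nabla_\eta h)(\partial_i,\partial_j)$ in the adapted frame, splitting off the $(1,1)$ component from the sum over $i,j\ge 2$; since $\eta=\partial_1$ is a unit vector orthogonal to the $\partial_a$ with $a\ge 2$, this rearranges directly to $(\nabla_\eta h)(\eta,\eta)=\nabla_\eta H-\operatorname{tr}_{\partial M}((\nabla_\eta h)(\cdot,\cdot))$. I would remark that the trace $\operatorname{tr}_{\partial M}$ here is the trace over the $(n-1)$-dimensional tangent space of $\partial M$.

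The substantive identity is \eqref{boundary derivative of h ij}. The idea is to start from $h(X,\eta)=-b(\nu,X)$ (valid only on $\partial M$) and differentiate it in the direction of a field $Y$ tangent to $\partial M$; this produces $(\nabla_Y h)(X,\eta)+h(\nabla_Y X,\eta)+h(X,\nabla_Y\eta)$ on the left and $-(\nabla_Y^{\partial N} b)(\nu,X)-b(\nabla_Y^{\partial N}\nu,X)-b(\nu,\nabla_Y X)$ on the right, where the connections are those intrinsic to $M$ and $\partial N$ respectively. One then converts $\nabla_Y h$ in the $(X,\eta)$ slots into the wanted $(\nabla_\eta h)(X,Y)$ slot using the Codazzi equation, which trades the difference of the two derivative orderings for a curvature term $K(Y,\eta,X,\nu)$; a second application of Codazzi on $\partial N$ (or a direct computation relating $\nabla^{\partial N} b$ to $\nabla_\nu b$ together with the curvature of $N$) converts $(\nabla_Y^{\partial N} b)(\nu,X)$ into $(\nabla_\nu b)(X,Y)$ plus the curvature term $K(Y,\nu,X,\eta)$. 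The remaining lower-order terms $h(\nabla_Y X,\eta)$ and $b(\nu,\nabla_Y X)$ cancel against each other after using \eqref{h k relation} once more, while $h(X,\nabla_Y\eta)=h(\nabla_Y\eta,X)$ and the cross terms involving $\nabla_Y\nu$ and $\nabla_Y\eta$ decompose into their components along $\eta$, along $\nu$, and tangent to $\partial M$, producing $-b(D_X\nu,Y)$, $b(X,Y)h_{\eta\eta}$ and $A(X,Y)b_{\nu\nu}$ after reindexing.

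The main obstacle is the bookkeeping in the last step: one must carefully distinguish the three connections $\nabla$ on $M$, $D$ on $\partial N$, and $\hat\nabla$ on $N$, and keep track of the two normal directions $\eta$ and $\nu$ which swap their tangential/normal roles between $M$ and $\partial N$ along $\partial M$. Decomposing $\hat\nabla_Y\eta$ and $\hat\nabla_Y\nu$ into $\partial M$-tangential, $\eta$- and $\nu$-components (which is where the $h_{\eta\eta}$, $b_{\nu\nu}$ and $D_X\nu$ terms enter) and matching the curvature contractions to the stated order $K(Y,\eta,X,\nu)$, $K(Y,\nu,X,\eta)$ is the delicate part; once the decomposition is set up consistently, each term lands in its place and the identity \eqref{boundary derivative of h ij} follows.
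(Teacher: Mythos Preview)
Your overall plan coincides with the paper's proof: differentiate the identity $h(\eta,X)=-b(\nu,X)$ along $\partial M$, apply Codazzi on $M$ to turn $(\nabla_Y h)(\eta,X)$ into $(\nabla_\eta h)(X,Y)$ plus $K(Y,\eta,X,\nu)$, apply Codazzi on $\partial N$ to turn $(D_Y b)(\nu,X)$ into $(D_\nu b)(X,Y)$ plus $K(Y,\nu,X,\eta)$, and collect the leftover connection terms. Parts \eqref{h k relation} and \eqref{boundary derivative of h 11} are handled exactly as in the paper.

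There is, however, a concrete bookkeeping error in your account of where the $b(X,Y)h_{\eta\eta}$ and $A(X,Y)b_{\nu\nu}$ terms originate. You claim that $h(\eta,\nabla_Y X)$ and $b(\nu,D_Y X)$ cancel via \eqref{h k relation}, and that the $h_{\eta\eta}$, $b_{\nu\nu}$ contributions instead come from decomposing $\nabla_Y\eta$ and $D_Y\nu$. This is backwards. The vectors $\nabla_Y\eta$ and $D_Y\nu$ are already tangent to $\partial M$ (since $\hat\nabla_Y\eta\in T(\partial N)$ projects to $T(\partial M)$ inside $TM$, and likewise $\hat\nabla_Y\nu\in TM$ projects to $T(\partial M)$ inside $T(\partial N)$), so $h(\nabla_Y\eta,X)$ and $b(D_Y\nu,X)$ survive unchanged into the final formula. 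By contrast, $\nabla_Y X$ and $D_Y X$ differ: one has
\[
\nabla_Y X=(\hat\nabla_Y X)^{\partial M}-b(X,Y)\,\eta,\qquad D_Y X=(\hat\nabla_Y X)^{\partial M}-A(X,Y)\,\nu,
\]
so their $\partial M$-tangential parts agree and cancel via \eqref{h k relation}, while the normal pieces give precisely $h(\eta,\nabla_Y X)+b(\nu,D_Y X)=-b(X,Y)h_{\eta\eta}-A(X,Y)b_{\nu\nu}$. Once you reroute these two terms correctly, your plan reproduces the paper's proof verbatim.
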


\begin{proof}
  By definitions of $h$, $k$ and the free boundary condition on $M$,
  \begin{equation}
    h (\eta, X) = \langle \bar{\nabla}_X \nu, \eta \rangle = - \langle
    \bar{\nabla}_X \eta, \nu \rangle = - B (\nu, X) .
  \end{equation}
  This proves \eqref{h k relation}. Note that $H$ vanishes identically, so
  \[ \nabla_{\eta} H = (g^{i j} - \eta^i \eta^j + \eta^i \eta^j) \nabla_{\eta}
     h_{i j}, \]
  and we get \eqref{boundary derivative of h 11}. We calculate now the two
  terms $\nabla_Y (h (\eta, X))$ and $\nabla_Y (k (\nu, X))$. First,
\begin{align}
& \nabla_Y (h (\eta, X)) \\
= & (\nabla_Y h) (\eta, X) + h (\nabla_Y \eta, X) + h (\eta, \nabla_Y X)
\\
= & (\nabla_{\eta} h) (X, Y) + K (Y, \eta, X, \nu) + h (\nabla_Y \eta, X)
+ h (\eta, \nabla_Y X) .
\end{align}
  where in the last line we used the Codazzi equation. Similarly,
\begin{align}
& \nabla_Y (b (\nu, X)) = D_Y (b (\nu, X)) \\
= & (D_Y b) (\nu, X) + b (\nu, D_Y X) + b (D_Y \nu, X) \\
= & (D_{\nu} b) (X, Y) + K (Y, \nu, X, \eta) + b (\nu, D_Y X) + b (D_Y
\nu, X) .
\end{align}
  We see that
\begin{align}
\nabla_Y X = & (\bar{\nabla}_Y X)^{\partial M} + \langle \nabla_Y X, \eta
\rangle \eta \\
= & (\bar{\nabla}_Y X)^{\partial M} - b (X, Y) \eta .
\end{align}
  and similarly $D_Y X = (\bar{\nabla}_Y X)^{\partial M} - A (X, Y) \nu$. By
  the relation \eqref{h k relation}, we have
  \begin{equation}
    h (\eta, \nabla_Y X) + b (\nu, D_Y X) = - b (X, Y) h_{\eta \eta} - A (X,
    Y) b_{\nu \nu} . \label{h k relation on derivatives}
  \end{equation}
  We know from $\nabla_Y (h (\eta, X))$ and $\nabla_X (b (\nu, X))$ that
\begin{align}
& (\nabla_{\eta} h) (X, Y) \\
= & - K (Y, \eta, X, \nu) - h (\nabla_Y \eta, X) - h (\eta, \nabla_Y X)
\\
& - (D_{\nu} b) (X, Y) - K (Y, \nu, X, \eta) - b (\nu, \nabla_X Y) - b
(D_X \nu, Y) .
\end{align}
  Using \eqref{h k relation on derivatives}, we can drop the terms containing
  $\nabla_X Y$, and we obtain \eqref{boundary derivative of h ij} for
  $(\nabla_{\eta} h) (X, Y)$.
\end{proof}

\begin{remark}
  See {\cite[Lemma 6.1]{edelen-convexity-2016}} for the case in
  $\mathbb{R}^n$, {\cite{lambert-inverse-2016}} for a special case where
  $\partial N$ is the sphere, and {\cite{hirsch-contracting-2020}} for a
  5-parameter perturbation.
\end{remark}

\begin{remark}
  Let $\{e_i \}$ (with $i$ in the appropriate range) be an orthonormal frame
  of $\partial M$, we have
  \begin{equation}
    h (\nabla_Y \eta, X) = \sum_i h (e_i, X) \langle \nabla_Y \eta, e_i
    \rangle = \sum_i h (e_i, X) b (Y, e_i),
  \end{equation}
  and similarly, $b (D_Y \nu, X) = \sum_i b (e_i, X) h (e_i, Y)$.
\end{remark}

It is easy then to see the following.

\begin{corollary}
  \label{boundary derivative bound}If $M$ is a free boundary MOTS, then
  \begin{equation}
    - c | \bar{A} | \leqslant \partial_1 | \bar{A} | \leqslant c | \bar{A} |,
  \end{equation}
  where $c > 0$ depends only on $|b|_{C^1}$.
\end{corollary}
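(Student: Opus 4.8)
The plan is to differentiate $|\bar A|^{2}$ in the direction $\eta$ and to exploit the feature of the perturbed second fundamental form recorded after Definition~\ref{perturbation}, namely that $\bar h(\eta,\cdot)$ vanishes along $\partial M$. Fix $x\in\partial M$ and an orthonormal frame $\{e_{1}=\eta,e_{2},\dots ,e_{n}\}$ at $x$ with $e_{2},\dots ,e_{n}$ spanning $T_{x}\partial M$. Since $M$ is a free boundary MOTS, $\nu$ is tangent to $\partial N$ along $\partial M$, so \eqref{h k relation} gives $h(\eta ,e_{a})=-b(\nu ,e_{a})$, while $T(\eta ,e_{a},\nu)=b(\eta ,\nu)g(e_{a},\eta)+b(e_{a},\nu)g(\eta ,\eta)=b(\nu ,e_{a})$ and $\Lambda_{0}g(\eta ,e_{a})=0$; hence $\bar h_{1a}=0$ for $a\geqslant 2$. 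Because $|\bar A|\geqslant 1$ by \eqref{perturbed and original relation}, the function $|\bar A|$ is smooth, $\partial_{1}|\bar A|=\nabla_{\eta}|\bar A|$, and at $x$ we have $|\bar A|\,\partial_{1}|\bar A|=\bar h_{ij}(\nabla_{\eta}\bar h)_{ij}=\bar h_{11}(\nabla_{\eta}\bar h)_{11}+\sum_{a,b\geqslant 2}\bar h_{ab}(\nabla_{\eta}\bar h)_{ab}$, the mixed terms having dropped out. Cauchy--Schwarz then yields $\bigl|\partial_{1}|\bar A|\bigr|\leqslant |(\nabla_{\eta}\bar h)(\eta ,\eta)|+|(\nabla_{\eta}\bar h)^{\top}|$, where $(\cdot)^{\top}$ denotes the part tangent to $\partial M$.

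The remaining task is to bound these two normal derivatives by $c(1+|A|)$. Writing $\bar h_{ij}=h_{ij}+T_{ij\nu}+\Lambda_{0}g_{ij}$ and using $\nabla g=0$, we get $\nabla_{\eta}\bar h=\nabla_{\eta}h+\nabla_{\eta}T_{\cdot\cdot\nu}$, and Lemma~\ref{lemma:derivatives of T} bounds $|\nabla T_{\cdot\cdot\nu}|$ by $c_{1}(1+|A|)$. For the tangential part, formula \eqref{boundary derivative of h ij} expresses $(\nabla_{\eta}h)(X,Y)$ for $X,Y\in T\partial M$ in terms of the ambient curvature $K$, the boundary second fundamental form $b$ and $\nabla_{\nu}b$, and terms of the form $h\cdot b$ and $A\cdot b$; using the remark preceding this corollary to rewrite $h(\nabla_{Y}\eta ,X)$ and $b(D_{X}\nu ,Y)$, each term is bounded by $c(1+|A|)$ with $c$ controlled by $|b|_{C^{1}}$ (and, through $K$ and the MOTS equation, by the fixed background data of $(N,\hat g,p)$). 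For the $(\eta ,\eta)$ component, \eqref{boundary derivative of h 11} gives $(\nabla_{\eta}h)(\eta ,\eta)=\nabla_{\eta}H-\operatorname{tr}_{\partial M}\bigl((\nabla_{\eta}h)(\cdot ,\cdot)\bigr)$, where the trace is already controlled by the previous estimate and $\nabla_{\eta}H$ is of lower order by the MOTS equation \eqref{mots in spacelike hypersurface} together with Lemma~\ref{lemma:derivatives of T}. Hence $|(\nabla_{\eta}\bar h)(\eta ,\eta)|+|(\nabla_{\eta}\bar h)^{\top}|\leqslant c(1+|A|)$.

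Finally \eqref{perturbed and original relation} gives $1+|A|\leqslant 1+c_{1}+|\bar A|\leqslant c|\bar A|$ because $|\bar A|\geqslant 1$, so $\bigl|\partial_{1}|\bar A|\bigr|\leqslant c|\bar A|$, which is the asserted two-sided bound. The delicate point is the cancellation $\bar h_{1a}=0$ on $\partial M$: without it the formula for $\partial_{1}|\bar A|$ would involve the mixed derivative $(\nabla_{\eta}h)(\eta ,e_{a})$, for which there is no first-order boundary identity in the vein of \eqref{boundary derivative of h ij}--\eqref{boundary derivative of h 11} and which a priori is only of size $|\nabla A|$---too weak to close the estimate. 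It is exactly the free boundary adapted construction of $\bar h$ (the choice of the tensor $T$) that removes this term, so the corollary is in essence a consequence of the free boundary condition and the orthogonality \eqref{h k relation}.
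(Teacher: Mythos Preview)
Your argument is correct and follows essentially the same route as the paper's own proof: differentiate $|\bar A|^{2}$ in the $\eta$-direction, use the vanishing $\bar h_{1a}=0$ on $\partial M$ to discard the mixed terms, and then bound $(\nabla_{\eta}\bar h)_{11}$ and $(\nabla_{\eta}\bar h)_{ab}$ via Proposition~\ref{pp h k relation}, Lemma~\ref{lemma:derivatives of T}, and the MOTS bound on $\nabla H$. Your explicit verification of $\bar h_{1a}=0$ and your closing remark on why this cancellation is indispensable (the absence of a boundary identity for $(\nabla_{\eta}h)(\eta,e_{a})$) add useful context that the paper leaves implicit; you might also note, as you do parenthetically, that the constant in fact depends on $|K|_{C^{0}}$ and $|p|_{C^{1}}$ as well as $|b|_{C^{1}}$, even though the corollary as stated in the paper lists only the last.
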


\begin{proof}
  We know that $| \bar{A} | > 0$. So
\begin{align}
& \partial_1 | \bar{A} |^2 \\
= & \sum_{i, j \geqslant 1}^n 2 \bar{h}_{i j} \nabla_1 \bar{h}_{i j}
\\
= & 2 \bar{h}_{11} \nabla_1 \bar{h}_{11} + 2 \sum_{i, j \geqslant 2}
\bar{h}_{i j} \nabla_1 \bar{h}_{i j} + 2 \sum_{i \geqslant 2} \bar{h}_{1
i} \nabla_1 \bar{h}_{1 i} \\
= & 2 \bar{h}_{11} \nabla_1 \bar{h}_{11} + 2 \sum_{i, j \geqslant 2}
\bar{h}_{i j} \nabla_1 \bar{h}_{i j} .
\end{align}
  where in the last line we have used $\bar{h}_{1 i} \equiv 0$ along $\partial
  M$. By the MOTS equation \eqref{mots in spacelike hypersurface} and
  \eqref{derivatives of p}
  \begin{equation}
    | \nabla_{\eta} H| \leqslant | \nabla H| = | \nabla
    \ensuremath{\operatorname{tr}}_M p| \leqslant c |A| \leqslant c | \bar{A}
    | .
  \end{equation}
  So from \eqref{boundary derivative of h 11} and \eqref{boundary derivative
  of h ij}, we get
  \begin{equation}
    | \nabla_1 \bar{h}_{11} | + | \nabla_1 \bar{h}_{i j} | \leqslant c (1 + |
    \bar{A} |) .
  \end{equation}
  So due to \eqref{perturbed and original relation}, we see $| \partial_1 |
  \bar{A} || \leqslant c | \bar{A} |$.
\end{proof}

The paper {\cite{andersson-curvature-2010}} estimate the size of the shear $|
\chi |$, at a first glance, it is naturual trying to estimate the perturbed
shear tensor $| \bar{\chi} |$ defined in \eqref{perturbed null second
fundamental form}. However, when computing $\partial_1 | \bar{\chi} |$, the
term $\sum_{i \geqslant 2} \bar{\chi}_{1 i} \nabla_1 \bar{\chi}_{1 i}$ is not
favorable due to non-vanishing $\bar{\chi}_{1 i}$. The boundary derivative
$\nabla_1 \bar{\chi}_{1 i}$ then essentially requires an estimate on $\nabla_1
h_{1 i}$ in terms of $|A|$ and its estimate seems difficult to do. This is the
reason that we consider directly the perturbed second fundamental form
$\bar{A}$.

\section{Stability inequality}\label{stability section}

The work of {\cite{galloway-generalization-2006}} observed that the stability
of a closed MOTS and the dominant energy condition implies some topological
properties of the MOTS. The stability of {\cite{galloway-generalization-2006}}
used the Schoen-Yau's rewrite of the stability and contains the scalar
curvature of the MOTS. Hence, the stability \eqref{mots regrouped stability}
is not well suited for curvature estimates. We will consider the less
famililar form. To this end we recall the first variation of the null
expansion $\theta^+$ (see {\cite{andersson-jangs-2010}}) and the variation of
the contact angle {\cite{stahl-convergence-1996,alaee-stable-2020}}.

\begin{lemma}
  \label{first variation of null expansion}Let $M$ be a free boundary MOTS,
  then the first variation of the null expansion $\theta^+$ is given by
  \begin{equation}
    \delta_{f \nu} \theta^+ |_M = L f := - \Delta f + 2 S (\nabla f) - |A|^2 f
    + f\ensuremath{\operatorname{div}}S - \langle h, p \rangle_M +
    f\mathcal{X}
  \end{equation}
  where $\mathcal{X}= -\ensuremath{\operatorname{Ric}} (\nu) +
  \bar{\nabla}_{\nu} (\ensuremath{\operatorname{tr}}_N p)
  -\ensuremath{\operatorname{div}}_N p (\nu) + H p_{\nu \nu}$ and $S$ is the
  1-form $k (\nu, \cdot)$ restricted to $M$. Along the boundary $\partial M$,
  the variation of $\langle \eta, \nu \rangle$ is given by
  \begin{equation}
    \delta_{f \nu} \langle \eta, \nu \rangle = B f : = - \nabla_{\eta} f + f b
    (\nu, \nu) .
  \end{equation}
\end{lemma}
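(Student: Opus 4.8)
The plan is to establish both formulae by direct first-variation computations; the free boundary enters only in the second one. For the bulk term, use \eqref{mots in spacelike hypersurface} to write $\theta^+ = H + \operatorname{tr}_M p$, so $\delta_{f\nu}\theta^+ = \delta_{f\nu} H + \delta_{f\nu}(\operatorname{tr}_M p)$. For the first summand I will invoke (or quickly recover from $\delta g_{ij} = 2 f h_{ij}$ and the Gauss equation) the classical first variation of the mean curvature under a normal deformation, $\delta_{f\nu} H = -\Delta f - (|A|^2 + \operatorname{Ric}_N(\nu)) f$. For the second summand, fix a local coordinate frame $\{e_i\}$ of $M$ and write $\operatorname{tr}_M p = g^{ij} p(e_i, e_j)$. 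Along the flow with velocity $f\nu$ one has $\delta g^{ij} = -2 f h^{ij}$; the basepoint moves by $f\nu$, contributing $f(\bar\nabla_\nu p)(e_i, e_j)$; and the frame vectors vary by $\bar\nabla_{\partial_t} e_i = \bar\nabla_{e_i}(f\nu) = (\partial_i f)\nu + f h_i{}^k e_k$, whose tangential part produces terms that exactly cancel the contribution of $\delta g^{ij}$ and whose normal part produces $2 S(\nabla f)$, where $S = p(\nu, \cdot)|_{TM}$. The outcome is $\delta_{f\nu}(\operatorname{tr}_M p) = 2 S(\nabla f) + f \operatorname{tr}_M(\bar\nabla_\nu p)$.

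The next step is to recast $\operatorname{tr}_M(\bar\nabla_\nu p)$ into the stated shape. Completing $\{e_i\}$ to an $N$-orthonormal frame $\{e_i, \nu\}$ gives $\bar\nabla_\nu(\operatorname{tr}_N p) = \operatorname{tr}_M(\bar\nabla_\nu p) + (\bar\nabla_\nu p)(\nu, \nu)$ and $\operatorname{div}_N p(\nu) = \sum_i (\bar\nabla_{e_i} p)(e_i, \nu) + (\bar\nabla_\nu p)(\nu, \nu)$, while a direct computation of the intrinsic divergence (using $\bar\nabla_{e_i}\nu = h_i{}^k e_k$ and $\bar\nabla_{e_i} e_j = \nabla_i e_j - h_{ij}\nu$) gives $\operatorname{div}_M S = \sum_i (\bar\nabla_{e_i} p)(\nu, e_i) + \langle h, p\rangle_M - H p_{\nu\nu}$. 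Eliminating $(\bar\nabla_\nu p)(\nu, \nu)$ and $\sum_i(\bar\nabla_{e_i} p)(\nu, e_i)$ among these three relations expresses $\operatorname{tr}_M(\bar\nabla_\nu p)$ in terms of $\bar\nabla_\nu(\operatorname{tr}_N p)$, $\operatorname{div}_N p(\nu)$, $\operatorname{div}_M S$, $\langle h, p\rangle_M$ and $H p_{\nu\nu}$. Substituting this back and adding $\delta_{f\nu} H$, the two $\operatorname{Ric}_N(\nu)$ contributions cancel and one reads off exactly $L f$ with $\mathcal{X}$ as defined; this reproduces the MOTS stability operator of \cite{andersson-jangs-2010}.

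For the boundary identity I would first note that, along $\partial M$, the free boundary condition $\nu \perp \eta$ makes $f\nu$ tangent to $\partial N$, so the deformed hypersurfaces keep their boundary on $\partial N$ and $\langle\eta,\nu\rangle$ stays defined (vanishing at $t=0$). Then differentiate $\delta_{f\nu}\langle\eta,\nu\rangle = \langle\delta_{f\nu}\eta, \nu\rangle + \langle\eta, \delta_{f\nu}\nu\rangle$. Since $\nu$ remains a unit normal of $M$, $\delta_{f\nu}\nu = -\nabla f$; and since along $\partial M$ the vector $\eta$ is simultaneously the outward conormal of $\partial M$ in $M$, $\langle\eta, \delta_{f\nu}\nu\rangle = -\nabla_\eta f$. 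The vector field $\eta$ is intrinsic to $\partial N$, hence transported by $\bar\nabla_{f\nu}$, so $\langle\delta_{f\nu}\eta, \nu\rangle = f\langle\bar\nabla_\nu\eta, \nu\rangle = f b(\nu, \nu)$ by the convention for $b$ used in the proof of Proposition \ref{pp h k relation} (with $\nu$ tangent to $\partial N$ along $\partial M$). Adding the two terms gives $B f$.

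I do not expect a genuinely hard step. The one place that needs care is the variation of the tangent frame in the bulk computation: it is precisely the normal component $(\partial_i f)\nu$ of $\bar\nabla_{\partial_t} e_i$ that produces the first-order term $2 S(\nabla f)$, and one must verify the cancellation of the $h$-weighted tangential pieces against $\delta g^{ij}$, which is sensitive to the sign conventions for $h$, $p$ and $b$ fixed in Section \ref{prelim}. The divergence bookkeeping in the second paragraph is then routine once the $N$-frame decomposition is set up.
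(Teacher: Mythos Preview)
The paper does not actually prove this lemma; it merely records the formulae and cites \cite{andersson-jangs-2010} for the variation of $\theta^+$ and \cite{stahl-convergence-1996,alaee-stable-2020} for the variation of the contact angle. Your direct computation is the standard derivation behind those references and is essentially correct: the split $\delta_{f\nu}\theta^+=\delta_{f\nu}H+\delta_{f\nu}(\operatorname{tr}_M p)$, the frame-variation calculation yielding $\delta_{f\nu}(\operatorname{tr}_M p)=2S(\nabla f)+f\operatorname{tr}_M(\bar\nabla_\nu p)$, and the divergence bookkeeping rewriting $\operatorname{tr}_M(\bar\nabla_\nu p)$ all check out, as does the boundary computation using $\delta_{f\nu}\nu=-\nabla f$ and $\langle\bar\nabla_\nu\eta,\nu\rangle=b(\nu,\nu)$.

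One slip to correct: you write that ``the two $\operatorname{Ric}_N(\nu)$ contributions cancel''. There is only one Ricci term, coming from $\delta_{f\nu}H=-\Delta f-(|A|^2+\operatorname{Ric}(\nu,\nu))f$; nothing cancels it, and it is precisely the $-\operatorname{Ric}(\nu)$ that appears in $\mathcal{X}$. Also note that the paper's statement writes $S=k(\nu,\cdot)$ and has a bare $-\langle h,p\rangle_M$; your computation (correctly) produces $S=p(\nu,\cdot)$ and $-f\langle h,p\rangle_M$, so these are typos in the paper rather than errors on your side.
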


\begin{remark}
  Note that $|\mathcal{X}|  \leqslant C$ with $C$ depending on
  $|\ensuremath{\operatorname{Ric}}|_{C^0}$ and $|p|_{C^1}$.
\end{remark}

We have an integral estimate for $| \bar{A} |$ following from the stability.

\begin{lemma}
  \label{l2 estimate}If $M$ is a stable MOTS with a free boundary, the for all
  $\varepsilon > 0$ and $\phi \in C_c^{\infty} (M)$, the following inequality
  \begin{equation}
    \int_M \phi^2 | \bar{A} |^2 \leqslant (1 + \varepsilon) \int_M | \nabla
    \phi |^2 + \left[ c_1 \int_M \phi^2 + c_2 \int_{\partial M} \phi^2 \right]
    \label{initial l2 estimate}
  \end{equation}
  holds where the constant $c_1$ depends on $1 / \varepsilon$,
  $|\ensuremath{\operatorname{Ric}}|_{C^0}$, $|p|_{C^1}$ and $c_2$ depends on
  $1 / \varepsilon$, $|p|_{C^1}$ and $|b|_{C^0}$.
\end{lemma}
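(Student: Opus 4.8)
The plan is to start from the stability condition \eqref{stability of free boundary mots} and feed in a test function of the form $u = f\phi$, where $f > 0$ is the stability function from Lemma \ref{first variation of null expansion} and $\phi \in C_c^\infty(M)$ is arbitrary. Concretely, since $Lf \geqslant 0$ in $M$ and $Bf = 0$ on $\partial M$, I would multiply $Lf$ by $\phi^2/f$ (this is the standard trick to convert the operator inequality into a Rayleigh-quotient-type inequality, using $f>0$), integrate over $M$, and integrate by parts. The boundary term produced by integrating the $-\Delta f$ contribution is controlled by the Robin condition $\nabla_\eta f = f\,b(\nu,\nu)$, which turns the boundary integral into $\int_{\partial M} \phi^2 b(\nu,\nu)$, bounded by $|b|_{C^0}\int_{\partial M}\phi^2$.

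Carrying out the integration by parts, the gradient terms combine (after completing the square in $\nabla\phi$ versus $\phi\nabla f/f$) to give the leading term $\int_M |\nabla\phi|^2$, with the cross terms absorbed using Cauchy--Schwarz with weight $\varepsilon$; this is where the factor $(1+\varepsilon)$ and the $1/\varepsilon$-dependence of $c_1$ enter. The key algebraic point is that the ``bad'' first-order term $2S(\nabla f)$ in $L$, after multiplying by $\phi^2/f$ and integrating by parts, contributes a term like $-2\int_M \phi^2 S(\nabla\log f)$ which again recombines with the completed square, leaving only a harmless $\operatorname{div}S$ term and an $|S|^2$ term (both bounded by $|p|_{C^1}$ and $|p|_{C^0}^2$ respectively) and the boundary contribution. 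So far this is exactly the classical MOTS stability manipulation, and it yields
\[
  \int_M \phi^2 |A|^2 \leqslant (1+\varepsilon)\int_M |\nabla\phi|^2 + c_1'\int_M \phi^2 + c_2'\int_{\partial M}\phi^2 + \int_M \phi^2 \langle h,p\rangle_M,
\]
where the last term comes from the $-\langle h,p\rangle_M$ summand in $L$.

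The remaining step is to pass from $|A|^2$ to $|\bar A|^2$ on the left and to dispose of the $\langle h,p\rangle_M$ term. For the latter, I use $|\langle h,p\rangle_M| \leqslant |A|\,|p|_{C^0} \leqslant \tfrac14|A|^2 + |p|_{C^0}^2$ pointwise, absorbing $\tfrac14\int\phi^2|A|^2$ into... wait, that requires the left side to have a larger coefficient, so instead I keep a definite fraction: arrange the identity so that a coefficient strictly bigger than $1$ multiplies $\int\phi^2|A|^2$ before this absorption, or simply bound $|\langle h,p\rangle_M|$ directly by $\varepsilon' |A|^2 + C(\varepsilon')|p|_{C^0}^2$ and absorb into a slightly larger left-hand coefficient obtained by not being greedy about the $(1+\varepsilon)$ split. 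Then, for the left side, \eqref{perturbed and original relation} gives $|\bar A| \leqslant c_1 + |A|$, hence $|\bar A|^2 \leqslant 2|A|^2 + 2c_1^2$, so $\tfrac12\int\phi^2|\bar A|^2 - c_1^2\int\phi^2 \leqslant \int\phi^2|A|^2$; rescaling $\phi$ (or rather $\varepsilon$) and collecting constants yields \eqref{initial l2 estimate} in the stated form, with $c_1$ depending on $1/\varepsilon$, $|\operatorname{Ric}|_{C^0}$, $|p|_{C^1}$ and $c_2$ on $1/\varepsilon$, $|p|_{C^1}$, $|b|_{C^0}$.

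The main obstacle I anticipate is purely bookkeeping rather than conceptual: getting the boundary term with the \emph{correct sign}. Integrating $-\int_M (\Delta f)\phi^2/f$ by parts produces $+\int_{\partial M}(\nabla_\eta f)\phi^2/f$, and substituting the Robin condition gives $+\int_{\partial M}\phi^2 b(\nu,\nu)$, which could be negative if $b(\nu,\nu)<0$; since we only want an upper bound on $\int\phi^2|\bar A|^2$ this is in the favorable direction only if the sign works out, so one must be careful that the inequality $Lf\geqslant 0$ is used as $\int (\phi^2/f)Lf \geqslant 0$ and track that all error terms land on the side that can be bounded by $|b|_{C^0}\int_{\partial M}\phi^2$ regardless of sign (which they do, after taking absolute values). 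A secondary subtlety is ensuring that $f$ does not vanish so that division by $f$ is legitimate — this is guaranteed because the stability function can be taken strictly positive (by a standard maximum-principle / first-eigenfunction argument for the operator $(L,B)$), and I would invoke this at the outset.
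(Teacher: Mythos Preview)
Your approach is correct and is essentially the same computation as in the paper: multiply the inequality $Lf\geqslant 0$ by $\phi^2/f$, integrate, use the Robin condition $\partial_\eta f=b(\nu,\nu)f$ on the boundary, and complete the square in the gradient terms; the paper organizes this step via the Galloway--Schoen rewrite
\[
f^{-1}Lf=\operatorname{div}\!\Big(S-\tfrac{\nabla f}{f}\Big)-\Big|S-\tfrac{\nabla f}{f}\Big|^2+|S|^2-|A|^2-\langle h,p\rangle_M+\mathcal X,
\]
but the content is identical to what you describe.

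One small correction: the term $\operatorname{div}_M S$ is \emph{not} bounded pointwise by $|p|_{C^1}$ alone. Writing $S=p(\nu,\cdot)|_{TM}$ and differentiating $\nu$ produces an extra $\langle h,p\rangle_M - H\,p_{\nu\nu}$ contribution, i.e.\ a term of size $|A|\,|p|_{C^0}$. This is harmless for exactly the reason you already identify for the explicit $\langle h,p\rangle_M$ term: estimate it by $\varepsilon'|A|^2+C(\varepsilon')|p|_{C^0}^2$ and absorb into the left. (Alternatively, integrate $\phi^2\operatorname{div}_M S$ by parts; this converts it into the boundary term $\int_{\partial M}\phi^2 p(\nu,\eta)$ and a cross term $-2\int_M\phi\,S(\nabla\phi)$ that merges cleanly into the completed square, which is effectively what the paper's rewrite accomplishes.) With that adjustment your constants and their stated dependencies are correct.
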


\begin{proof}
  We reorder the terms in Lemma \ref{first variation of null expansion} as in
  {\cite{galloway-generalization-2006}}, we see
  \begin{equation}
    0 \leqslant f^{- 1} L f =\ensuremath{\operatorname{div}} \left( S -
    \tfrac{\nabla f}{f} \right) - \left| S - \tfrac{\nabla f}{f} \right|^2 +
    |S|^2 - |A|^2 - \langle h, p \rangle_M + f\mathcal{X}.
  \end{equation}
  Multiplying the above with $\phi^2$ and integration by parts,
\begin{align}
& \int_M \phi^2 [|S - f^{- 1} \nabla f|^2 + |A|^2] \\
\leqslant & \int_{\partial M} \phi^2 \left\langle S - \tfrac{\nabla f}{f},
\eta \right\rangle + \int_M [|S|^2 - \langle h, p \rangle_M +\mathcal{X}]
\phi^2 - 2 \left\langle S - \tfrac{\nabla f}{f}, \nabla \phi \right\rangle
\phi .
\end{align}
  Using Cauchy-Schwarz inequality,
  \[ 2 \left| \left\langle S - \tfrac{\nabla f}{f}, \nabla \phi \right\rangle
     \phi \right| \leqslant \left| S - \tfrac{\nabla f}{f} \right|^2 \phi^2 +
     | \nabla \phi |^2, \]
  and the boundary stability condition in \eqref{stability of free boundary
  mots},
  \[ \partial_{\eta} f = b (\nu, \nu) f, \]
  we obtain
  \[ \int_M \phi^2 |A|^2 \leqslant \int_M | \nabla \phi |^2 + [c_1 - \langle
     h, p \rangle] \phi^2 + c_2 \int_{\partial M} \phi^2, \]
  where $c_1$ depends on $|\ensuremath{\operatorname{Ric}}|_{C^0}$,
  $|p|_{C^1}$ and $c_2$ depends on $|p|_{C^1}$ and $|b|_{C^0}$. Using
  \begin{equation}
    | \langle h, p \rangle_M | \leqslant \varepsilon |A|^2 + \tfrac{1}{2
    \varepsilon} |p|^2, \label{cauchy-schwarz h p}
  \end{equation}
  we get
  \[ \int_M \phi^2 |A|^2 \leqslant (1 + \varepsilon) \int_M | \nabla \phi |^2
     + \left[ c_1 \int_M \phi^2 + c_2 \int_{\partial M} \phi^2 \right] . \]
  Since $\bar{h}_{i j} = h_{i j} + T_{i j \nu}$, we do the same as in
  \eqref{cauchy-schwarz h p} and we obtain the desired inequality for $\int
  \phi^2 | \bar{A} |^2$ (with renaming of $\varepsilon$).
\end{proof}

Now we use a common trick to get rid of the boundary terms in \eqref{initial
l2 estimate}.

\begin{corollary}
  \label{l2 estimate no boundary}If $M$ is a stable MOTS with a free boundary,
  the for all $\varepsilon > 0$ and $\phi \in C_c^{\infty} (M)$, the following
  inequality
  \begin{equation}
    \int_M \phi^2 | \bar{A} |^2 \leqslant (1 + \varepsilon) \int_M | \nabla
    \phi |^2 + c_1 \int_M \phi^2 \label{initial l2 estimate no boundary}
  \end{equation}
  holds where the constant $c_1$ depends on $1 / \varepsilon$,
  $|\ensuremath{\operatorname{Ric}}|_{C^0}$, $|p|_{C^1}$ and $c_2$ depends on
  $1 / \varepsilon$, $|p|_{C^1}$ and $|b|_{C^0}$.
\end{corollary}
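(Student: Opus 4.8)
The plan is to remove the boundary term $c_2\int_{\partial M}\phi^2$ from \eqref{initial l2 estimate} by a standard trace-type device: estimate $\int_{\partial M}\phi^2$ by $\int_M|\nabla\phi|^2$ plus a controlled bulk term, using the divergence theorem with a vector field that is transverse to $\partial M$. Concretely, I would choose the outward normal $\eta$ along $\partial M$, extend it to a smooth vector field on $M$ (recall $\eta$ has already been extended to all of $M$ in Definition \ref{perturbation}), and write
\begin{equation}
\int_{\partial M}\phi^2 = \int_{\partial M}\phi^2\langle \eta,\eta\rangle = \int_M \operatorname{div}(\phi^2\eta) = \int_M \phi^2\operatorname{div}\eta + 2\int_M \phi\,\langle\nabla\phi,\eta\rangle.
\end{equation}
Then $|\operatorname{div}\eta|$ is bounded by a constant depending on $|\eta|_{C^1}$ (equivalently on $|b|_{C^1}$ plus the ambient geometry), and Cauchy–Schwarz gives $2|\phi\langle\nabla\phi,\eta\rangle|\le \delta^{-1}|\nabla\phi|^2 + \delta|\eta|_{C^0}^2\phi^2$ for any $\delta>0$.

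Having that, I would feed this estimate back into \eqref{initial l2 estimate}. Since the boundary term there appears multiplied by $c_2$, and the new $|\nabla\phi|^2$ it produces is multiplied by a factor proportional to $1/\delta$, I need to be a little careful about the coefficient of $\int_M|\nabla\phi|^2$: after substitution it becomes $(1+\varepsilon) + c_2\delta^{-1}$. The trick is that $\varepsilon$ in the original Lemma \ref{l2 estimate} is itself free, so I first apply Lemma \ref{l2 estimate} with some small $\varepsilon'$, then choose $\delta$ large (which makes $c_2\delta^{-1}$ small) at the cost of enlarging the bulk coefficient $c_2\delta|\eta|_{C^0}^2$, which is harmless since it only adds to $c_1\int_M\phi^2$. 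Relabelling, one obtains \eqref{initial l2 estimate no boundary} with a new $\varepsilon$ (absorbing $\varepsilon'+c_2\delta^{-1}$) and a new $c_1$ depending on $1/\varepsilon$, $|\operatorname{Ric}|_{C^0}$, $|p|_{C^1}$ and $|b|_{C^1}$.

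There is essentially no hard obstacle here; the only mild subtlety is bookkeeping of constants and making sure the coefficient of $\int_M|\nabla\phi|^2$ can be pushed down to $1+\varepsilon$ for an arbitrary prescribed $\varepsilon>0$ — this works precisely because the two free parameters ($\varepsilon$ in Lemma \ref{l2 estimate} and $\delta$ in the trace estimate) can be tuned independently, the $\delta$-dependent bad term living only on the $\int_M|\nabla\phi|^2$ side and being made small, while the $\delta$-dependent growth is dumped into the zeroth-order term. I should also note that the statement as written mentions a constant $c_2$ that no longer appears in \eqref{initial l2 estimate no boundary}; I would simply drop that clause, or keep it only as a record of the dependence absorbed into $c_1$. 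One more point worth flagging: the extension of $\eta$ off $\partial M$ is not canonical, but since $\phi\in C_c^\infty(M)$ is compactly supported and we only used $\eta$ near $\partial M$ via the divergence theorem, any fixed smooth extension with $\langle\eta,\eta\rangle=1$ on $\partial M$ works, and its $C^1$ norm is controlled by $|b|_{C^1}$ together with the ambient data already listed in Theorem \ref{main curvature estimate}.
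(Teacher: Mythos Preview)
Your approach is essentially the paper's: both convert $\int_{\partial M}\phi^2$ to interior integrals by the divergence theorem applied to $\phi^2 X$ with $X$ equal to the conormal $\eta$ along $\partial M$. The paper takes $X=\nabla d$ (the ambient gradient of the distance to $\partial N$), you take the extended $\eta$; since $Dd=\eta$ on $\partial N$ these are the same idea, and your bookkeeping on the $(1+\varepsilon)$ coefficient is actually more explicit than the paper's.

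One technical point to tighten: your claim that $|\operatorname{div}_M\eta|$ is bounded by $|\eta|_{C^1}$ plus ambient data is not quite right as stated, because $\eta$ is an ambient vector field not tangent to $M$, and the tangential divergence of its projection to $TM$ picks up the term $-\langle\eta,\nu\rangle H$. The paper makes this explicit via $\sum_i\langle\nabla_{e_i}\nabla d,e_i\rangle=\sum_i(D^2 d)(e_i,e_i)-H\langle Dd,\nu\rangle$ and then invokes the MOTS bound $|H|=|\operatorname{tr}_M p|\leqslant n|p|_{C^0}$. This is precisely where the MOTS equation enters and why the resulting constant depends on $|p|_{C^0}$; your argument needs the same input, after which it is complete.
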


\begin{proof}
  Since $\phi$ is compactly supported, $\langle D d, \eta \rangle = 1$ by the
  free boundary condition of $M$, so by divergence theorem,
\begin{align}
& \int_{\partial M} \phi^2 \\
= & \int_{\partial M} \phi^2 \langle D d, \eta \rangle \\
= & \int_M \ensuremath{\operatorname{div}}_M (\phi^2 \nabla d) \\
= & \int_M 2 \phi \langle \nabla \phi, \nabla d \rangle + \phi^2 \langle
\nabla_{e_i} \nabla d, e_i \rangle .
\end{align}
  Note that $|H| \leqslant c$ from \eqref{mots in spacelike hypersurface}, so
  by decomposition of Hessian of $M$,
  \begin{equation}
    \sum_{i = 1}^n \langle \nabla_{e_i} \nabla d, e_i \rangle = \sum_{i = 1}^n
    (D^2 d) (e_i, e_i) - H \langle D d, \nu \rangle \label{decomposition
    hessian}
  \end{equation}
  is bounded by a constant depending on $|d|_{C^2}$ and $|p|_{C^0}$. So
  \begin{equation}
    \int_{\partial M} \phi^2 \leqslant c \int_M \phi^2 + c \int_M \phi |
    \nabla \phi | .
  \end{equation}
  Using Cauchy-Schwarz inequality on the second term on the right and
  combining with \eqref{initial l2 estimate}, we have obtained \eqref{initial
  l2 estimate no boundary}.
\end{proof}

\begin{lemma}
  \label{lp estimate by gradient}Let $M$ be a stable free boundary MOTS, we
  have for any $\varepsilon > 0$ and $q \geqslant 2$, we have
\begin{align}
& \int_M \phi^2 | \bar{A} |^{q + 2} \\
\leqslant & \tfrac{q^2}{4} (1 + \varepsilon) \int_M \phi^2 | \bar{A} |^{q
- 2} | \nabla | \bar{A} ||^2 + c_1 \int_M [\phi^2 + | \nabla \phi |^2] |
\bar{A} |^q + c_2 \int_{\partial M} \phi^2 | \bar{A} |^q . \label{eq:lp
estimate by gradient}
\end{align}
  The dependence of the constants $c_i$ are the same with Lemma \ref{l2
  estimate}.
\end{lemma}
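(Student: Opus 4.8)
The plan is to obtain \eqref{eq:lp estimate by gradient} by inserting the test function $\phi|\bar A|^{q/2}$ into the $L^2$-type stability estimate \eqref{initial l2 estimate} (the version \emph{with} the boundary term, since the boundary term is what we want to display on the right-hand side here). So first I would set $\psi=\phi|\bar A|^{q/2}$, noting that $|\bar A|\geqslant 1$ by \eqref{perturbed and original relation} guarantees $|\bar A|^{q/2}$ is Lipschitz away from nothing problematic, and compute
\[
\nabla\psi = |\bar A|^{q/2}\nabla\phi + \tfrac{q}{2}\phi|\bar A|^{q/2-1}\nabla|\bar A|,
\]
so that
\[
|\nabla\psi|^2 \leqslant (1+\varepsilon)\,\tfrac{q^2}{4}\phi^2|\bar A|^{q-2}|\nabla|\bar A||^2 + c(1/\varepsilon)|\nabla\phi|^2|\bar A|^q
\]
after a Cauchy–Schwarz/Young split of the cross term (absorbing the $q$-dependence into the $c(1/\varepsilon)$ constant is harmless since only the leading coefficient $\tfrac{q^2}{4}$ needs to be tracked sharply). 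The left-hand side of \eqref{initial l2 estimate} applied to $\psi$ is exactly $\int_M\phi^2|\bar A|^{q+2}$, and the remaining terms $c_1\int_M\psi^2 = c_1\int_M\phi^2|\bar A|^q$ and $c_2\int_{\partial M}\psi^2 = c_2\int_{\partial M}\phi^2|\bar A|^q$ are precisely the lower-order terms appearing in \eqref{eq:lp estimate by gradient}. Renaming $\varepsilon$ at the end absorbs the two nested $(1+\varepsilon)$ factors into a single one.

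The one genuine subtlety is that $\psi=\phi|\bar A|^{q/2}$ is not a priori $C^\infty_c(M)$ — $|\bar A|$ is only Lipschitz where it is smooth, and more importantly we have not yet established that $|A|$ (hence $|\bar A|$) is even bounded, which is the whole point of the paper. So strictly one should first run this argument with $|\bar A|$ replaced by a truncation $|\bar A|_L:=\min(|\bar A|,L)$, or note that \eqref{initial l2 estimate} extends to Lipschitz test functions by density, derive the estimate for $|\bar A|_L$ with constants independent of $L$, and then let $L\to\infty$ by monotone convergence. Since $|\bar A|_L$ satisfies $|\nabla|\bar A|_L|\leqslant|\nabla|\bar A||$ pointwise and $|\bar A|_L\leqslant|\bar A|$, every term behaves correctly under the truncation and the limit. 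This is the step I expect to need the most care in the writeup, though it is standard.

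Everything else is routine: the differentiation of $|\bar A|^{q/2}$, the Young's inequality split, and matching terms against \eqref{initial l2 estimate}. I would present the proof by first recording $\nabla\psi$, then the pointwise bound on $|\nabla\psi|^2$, then substituting into \eqref{initial l2 estimate}, and finally remarking on the truncation argument to justify the use of $\psi$ as a test function. The dependence of $c_1,c_2$ is inherited verbatim from Lemma \ref{l2 estimate} since we only introduced $\varepsilon$-dependent and $q$-dependent (hence, for fixed $q$, harmless) constants in the Young's inequality step.

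\begin{proof}
  By \eqref{perturbed and original relation} we have $| \bar{A} | \geqslant 1$,
  so $| \bar{A} |^{q / 2}$ is a positive Lipschitz function on $M$ and
  $\psi := \phi | \bar{A} |^{q / 2}$ is a compactly supported Lipschitz
  function. (Strictly, $| \bar{A} |$ need not yet be known to be bounded; one
  first runs the argument with $| \bar{A} |$ replaced by the truncation
  $| \bar{A} |_L := \min (| \bar{A} |, L)$, for which
  $| \nabla | \bar{A} |_L | \leqslant | \nabla | \bar{A} ||$ and
  $| \bar{A} |_L \leqslant | \bar{A} |$ pointwise, obtains constants
  independent of $L$, and then lets $L \to \infty$ by monotone convergence.
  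We suppress this standard truncation below.) We compute
\begin{align}
\nabla \psi = & | \bar{A} |^{q / 2} \nabla \phi + \tfrac{q}{2} \phi
| \bar{A} |^{q / 2 - 1} \nabla | \bar{A} | .
\end{align}
  By Young's inequality, for any $\varepsilon > 0$,
\begin{align}
| \nabla \psi |^2 \leqslant & (1 + \varepsilon) \tfrac{q^2}{4} \phi^2
| \bar{A} |^{q - 2} | \nabla | \bar{A} ||^2 + c (1 / \varepsilon)
| \nabla \phi |^2 | \bar{A} |^q .
\end{align}
  Applying \eqref{initial l2 estimate} with test function $\psi$, and noting
  that
  \[ \int_M \psi^2 | \bar{A} |^2 = \int_M \phi^2 | \bar{A} |^{q + 2}, \quad
     \int_M \psi^2 = \int_M \phi^2 | \bar{A} |^q, \quad
     \int_{\partial M} \psi^2 = \int_{\partial M} \phi^2 | \bar{A} |^q, \]
  we obtain
\begin{align}
& \int_M \phi^2 | \bar{A} |^{q + 2} \\
\leqslant & (1 + \varepsilon)^2 \tfrac{q^2}{4} \int_M \phi^2 | \bar{A}
|^{q - 2} | \nabla | \bar{A} ||^2 + c_1 \int_M [\phi^2 + | \nabla \phi
|^2] | \bar{A} |^q + c_2 \int_{\partial M} \phi^2 | \bar{A} |^q,
\end{align}
  where $c_1$ depends on $1 / \varepsilon$,
  $|\ensuremath{\operatorname{Ric}}|_{C^0}$, $|p|_{C^1}$ and (through the
  Young's inequality constant, for fixed $q$) on $q$, and $c_2$ depends on
  $1 / \varepsilon$, $|p|_{C^1}$, $|b|_{C^0}$. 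Renaming $\varepsilon$ so that
  $(1 + \varepsilon)^2$ becomes $1 + \varepsilon$ yields \eqref{eq:lp estimate
  by gradient}.
\end{proof}
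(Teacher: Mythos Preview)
Your proof is correct and follows exactly the paper's approach: substitute $\psi=\phi|\bar A|^{q/2}$ into the stability inequality \eqref{initial l2 estimate}, expand $|\nabla\psi|^2$ with a Young's inequality split, and rename $\varepsilon$. Your additional remark on the truncation $|\bar A|_L$ to justify using $\psi$ as a test function is a point the paper passes over silently, so your write-up is if anything more careful.
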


\begin{proof}
  Letting $\phi$ to be $\phi | \bar{A} |^{q / 2}$ in \eqref{initial l2
  estimate}, we have
  \begin{equation}
    \int_M \phi^2 | \bar{A} |^{q + 2} \leqslant (1 + \varepsilon) \int_M |
    \nabla (\phi | \bar{A} |^{q / 2}) |^2 + c_1 \int_M \phi^2 | \bar{A} |^q +
    c_2 \int_{\partial M} \phi^2 | \bar{A} |^q .
  \end{equation}
  We estimate $| \nabla (\phi | \bar{A} |^{q / 2}) |^2$ as follows:
\begin{align}
& | \nabla (\phi | \bar{A} |^{q / 2}) |^2 \\
= & || \bar{A} |^{q / 2} \nabla \phi + \phi \tfrac{q}{2} | \bar{A} |^{q /
2 - 1} \nabla | \bar{A} ||^2 \\
\leqslant & \tfrac{q^2}{4} (1 + \varepsilon) \phi^2 | \bar{A} |^{q - 2} |
\nabla | \bar{A} ||^2 + c (\varepsilon^{- 1}) | \nabla \phi |^2 | \bar{A}
|^q .
\end{align}
  Combing the above two inequalities and ajusting the value $\varepsilon$, we
  obtained the desired inequality.
\end{proof}

We show that integrals $\int_M \phi^2 | \bar{A} |^q$ on the boundary can be
transfered to the an integral in the interior via an application of the
divergence theorem. It works for any hypersurface $M$.

\begin{lemma}
  Let $M$ be any hypersurface, for any $q > 0$,
  \begin{equation}
    \int_{\partial M} | \bar{A} |^q \phi^2 \leqslant c \int_M \phi  | \bar{A}
    |^{q - 1} (\phi | \bar{A} | + \phi | \nabla | \bar{A} || + | \nabla \phi
    |), \label{transfer of boundary integral}
  \end{equation}
  where the constant $c$ only depends on $p$ and $|d|_{C^2}$.
\end{lemma}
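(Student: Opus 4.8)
The plan is to reduce the boundary integral $\int_{\partial M}|\bar A|^q\phi^2$ to an interior integral by the same divergence-theorem trick already used in Corollary \ref{l2 estimate no boundary}, but now applied to the vector field $|\bar A|^q\phi^2\nabla d$ instead of $\phi^2\nabla d$. Since $M$ meets $\partial N$ orthogonally, the free boundary condition gives $\langle Dd,\eta\rangle=1$ along $\partial M$, hence $\langle\nabla d,\eta\rangle=1$ there (here $d$ is the distance to $\partial N$, so $\nabla d$ restricted to $M$ is the tangential part, and its component along $\eta$ is exactly $1$). Therefore
\begin{equation}
\int_{\partial M}|\bar A|^q\phi^2 = \int_{\partial M}|\bar A|^q\phi^2\langle\nabla d,\eta\rangle = \int_M \operatorname{div}_M\bigl(|\bar A|^q\phi^2\nabla d\bigr).
\end{equation}

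Next I would expand the divergence by the product rule into three groups of terms:
\begin{equation}
\operatorname{div}_M\bigl(|\bar A|^q\phi^2\nabla d\bigr) = q|\bar A|^{q-1}\phi^2\langle\nabla|\bar A|,\nabla d\rangle + 2|\bar A|^q\phi\langle\nabla\phi,\nabla d\rangle + |\bar A|^q\phi^2\,\operatorname{div}_M(\nabla d).
\end{equation}
For the last term, $\operatorname{div}_M(\nabla d)=\Delta_M d$ is controlled exactly as in \eqref{decomposition hessian}: decomposing the Hessian of $d$ over $N$ into its $M$-tangential part plus the normal contribution $-H\langle Dd,\nu\rangle$, and using $|H|\leqslant c$ from \eqref{mots in spacelike hypersurface}, gives $|\Delta_M d|\leqslant c$ with $c$ depending only on $|d|_{C^2}$ and $|p|_{C^0}$. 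Since $|\nabla d|\leqslant 1$ pointwise (distance function), each of the three terms is bounded in absolute value by $c|\bar A|^{q-1}\phi(\phi|\nabla|\bar A||)$, $c|\bar A|^{q-1}\phi(|\bar A|\phi)\cdot\frac1{|\bar A|}$... more carefully: the middle term is $2|\bar A|^q\phi|\nabla\phi|\leqslant c|\bar A|^{q-1}\phi|\bar A||\nabla\phi|$ after noting $|\bar A|\geqslant 1$, which is dominated by $c\phi|\bar A|^{q-1}|\nabla\phi|$ only if one absorbs the extra $|\bar A|$—so instead one keeps it as $c\phi|\bar A|^{q-1}\cdot|\nabla\phi|$ is wrong; rather one writes $2|\bar A|^q\phi|\nabla\phi| = 2\phi|\bar A|^{q-1}\cdot|\bar A|\,|\nabla\phi|$, matching the third summand $\phi|\bar A|^{q-1}\cdot|\nabla\phi|$ in \eqref{transfer of boundary integral} up to the harmless factor $|\bar A|\geqslant 1$ — actually the stated right-hand side already contains the term $|\bar A|^{q-1}\phi|\nabla\phi|$, and since the factor $\phi|\bar A|^q\langle\nabla\phi,\nabla d\rangle$ appears, one simply notes $|\bar A|^q = |\bar A|^{q-1}\cdot|\bar A|$ and the extra $|\bar A|$ is absorbed into the $|\bar A|\phi$ slot. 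In any case, collecting the three bounds yields
\begin{equation}
\int_{\partial M}|\bar A|^q\phi^2 \leqslant c\int_M \phi|\bar A|^{q-1}\bigl(\phi|\bar A| + \phi|\nabla|\bar A|| + |\nabla\phi|\bigr),
\end{equation}
which is \eqref{transfer of boundary integral}.

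I do not expect any genuine obstacle here; the statement is essentially a bookkeeping consequence of the divergence theorem together with the two facts already established, namely $\langle\nabla d,\eta\rangle=1$ on $\partial M$ (free boundary condition) and the bound on $\Delta_M d$. The only mild care needed is the product-rule expansion and tracking that the constant depends only on $p$ (through $|H|$ and $|p|_{C^0}$) and on $|d|_{C^2}$; the factor $|\bar A|\geqslant 1$ from \eqref{perturbed and original relation} lets one freely trade a power of $|\bar A|$ when matching the three terms to the right-hand side. A small subtlety worth a remark is that $|\bar A|^q$ may fail to be differentiable where $|\bar A|=0$, but by \eqref{perturbed and original relation} we have $|\bar A|\geqslant 1>0$ everywhere on a MOTS, so $|\bar A|^q$ is smooth wherever $|\bar A|$ is, and $\nabla|\bar A|^q = q|\bar A|^{q-1}\nabla|\bar A|$ is justified; for a general hypersurface one would instead work on the open set $\{|\bar A|>0\}$ and pass to the limit, but that is not needed for the application.
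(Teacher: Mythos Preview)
Your approach is correct and identical to the paper's: apply the divergence theorem to $|\bar A|^q\phi^2\,Dd$ using $\langle Dd,\eta\rangle=1$ on $\partial M$, expand by the product rule into the same three terms, and bound $\operatorname{div}_M Dd$ exactly as in Corollary~\ref{l2 estimate no boundary}. Your hesitation over the middle term $2|\bar A|^q\phi\langle\nabla\phi,\nabla d\rangle$ is in fact warranted---the divergence argument produces $|\bar A|^q\phi|\nabla\phi|$ rather than $|\bar A|^{q-1}\phi|\nabla\phi|$, and indeed when the paper invokes this lemma in the proof of Proposition~\ref{integral curvature estimate} it writes the resulting interior term as $c\int_M\phi|\nabla\phi||\bar A|^q$, so the printed right-hand side of \eqref{transfer of boundary integral} is off by one power of $|\bar A|$ in the last summand (harmless for the application).
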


\begin{proof}
  Suppose now that $\phi$ is a function compactly supported, since $\langle D
  d, \eta \rangle \equiv 1$ along $\partial M$. By the divergence theorem,
\begin{align}
& \int_{\partial M} | \bar{A} |^q \phi^2 \\
= & \int_{\partial M} | \bar{A} |^q \phi^2 \langle D d, \eta \rangle
\\
= & \int_M \ensuremath{\operatorname{div}}_M (| \bar{A} |^q \phi^2 D d)
\\
= & \int_M | \bar{A} |^q \phi^2 \ensuremath{\operatorname{div}}_M D d +
\int_M \phi^2 D d \cdot \nabla | \bar{A} |^q + 2 \int_M | \bar{A} |^q \phi
\nabla \phi \cdot D d,
\end{align}
  and then the lemma follows similarly as Corollary \ref{l2 estimate no
  boundary}.
\end{proof}

\section{Simons inequality}\label{simons section}

\subsection{Simons inequality of $\bar{A}$}

First, we have a Simons type inequality for $\bar{A}$ of a marginally outer
trapped hypersurface $M$. The Simons identity (Theorem \ref{simons}) would be
our starting point. We combine it with estimates of the perturbation. We use
$A \ast B$ to denote linear combinations of contractions of $A \otimes B$ for
convenience.

\begin{lemma}
  Let $M$ be a MOTS, then
  \begin{equation}
    \bar{h}_{i j} \Delta \bar{h}_{i j} \geqslant \bar{h} \ast (\nabla K +
    \nabla^2 H + \Delta \bar{T}) - | \bar{A} |^4 - c (| \bar{A} |^3 + | \nabla
    \bar{A} | | \bar{A} |), \label{simons for bar h}
  \end{equation}
  where the constant $c$ depends only on $|p|_{C^1}$, $|b|_{C^1}$ and
  $|K|_{C^0}$.
\end{lemma}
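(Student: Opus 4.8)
The plan is to start from the Simons identity in Theorem~\ref{simons} applied to the (unperturbed) second fundamental form $h$, contract with $\bar{h}_{ij}$, and then convert everything into statements about $\bar{A}$ using the relations in \eqref{perturbed and original relation} and the derivative bounds \eqref{derivatives of T}, \eqref{derivatives of p}. Concretely, writing $\bar{h}_{ij}=h_{ij}+\bar{T}_{ij}$ we have $\Delta \bar{h}_{ij}=\Delta h_{ij}+\Delta\bar{T}_{ij}$, so $\bar{h}_{ij}\Delta\bar{h}_{ij}=\bar{h}_{ij}\Delta h_{ij}+\bar{h}_{ij}\Delta\bar{T}_{ij}$; the second term is exactly the $\bar{h}\ast\Delta\bar{T}$ contribution in \eqref{simons for bar h}. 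For the first term I substitute the right-hand side of \eqref{ssy 1.20}: the terms $\nabla_i\nabla_j H$ and the two curvature-derivative terms $\nabla_j K_{n+1,kik}$, $\nabla_k K_{n+1,ijk}$ become the $\bar{h}\ast(\nabla K+\nabla^2 H)$ contribution; the two curvature-times-$h$ terms $K_{kjil}h_{kl}$ and $K_{kjkl}h_{il}$ are bounded by $c|A|^2\le c(|\bar{A}|^2+c_1^2)$ after contracting with $\bar h$, giving a contribution absorbed into $c(|\bar A|^3+\cdots)$ once one more factor of $|\bar h|\le c(1+|\bar A|)$ is accounted for; and the genuinely quartic terms are $-\bar{h}_{ij}|A|^2 h_{ij}$ and $\bar{h}_{ij}h_{il}h_{jl}H$.

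The next step is to handle these quartic terms. I would write $h=\bar{h}-\bar{T}$ everywhere a quartic factor of $h$ appears, expand, and keep track of which pieces are truly order $|\bar A|^4$ and which are lower order. For instance $-\bar{h}_{ij}|A|^2h_{ij}=-\bar{h}_{ij}|A|^2\bar{h}_{ij}+\bar{h}_{ij}|A|^2\bar{T}_{ij}$; replacing $|A|^2$ by $|\bar A|^2$ plus lower-order cross terms, the leading piece $-|\bar A|^2|\bar h|^2\ge -|\bar A|^4$ (up to a harmless constant since $|\bar h|\le |\bar A|$ by definition of the operator norm versus the $\ell^2$ norm — or, if one uses the same norm throughout, this is literally $-|\bar A|^4$), and the remaining terms are $O(|\bar A|^3)$ because $\bar T$ is bounded and $|A|\le c_1+|\bar A|$. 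Similarly the term $\bar{h}_{ij}h_{il}h_{jl}H$: since $M$ is a MOTS, $H=-\operatorname{tr}_M p$ is bounded by $c|p|_{C^0}$, so this whole term is bounded by $c|A|^3\le c(|\bar A|^3+1)\le c|\bar A|^3$ (using $|\bar A|\ge 1$), hence it is absorbed into the error term, not the leading $-|\bar A|^4$. The gradient term $|\nabla\bar A||\bar A|$ in the error arises only if the contraction produces a stray first derivative; checking \eqref{ssy 1.20} it does not directly, but it will appear when one later wants a clean divergence form — for the present lemma I would simply note that no $\nabla\bar A$ appears at all, or include it harmlessly with coefficient zero. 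Actually, re-examining: the $\bar h\ast\Delta\bar T$ term, once one recalls $\Delta\bar T$ involves $\nabla^2 b$, $\nabla b\ast h$ and $b\ast\nabla h$ by a Lemma~\ref{lemma:derivatives of T}-type second-derivative computation, contributes exactly a $\bar h\ast(b\ast\nabla h)\sim |\bar A||\nabla A|\le c|\bar A|(|\nabla\bar A|+|\bar A|)$ piece, which is the source of the $|\nabla\bar A||\bar A|$ term; so I would keep $\Delta\bar T$ as an explicit summand on the right as written and only use $|\bar h\ast\Delta\bar T|\le |\bar h||\Delta\bar T|$ when I genuinely need a bound, noting $|\Delta\bar T|\le c(1+|A|+|\nabla A|)$.

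The main obstacle, such as it is, is bookkeeping rather than conceptual: one must be careful that every appearance of $h$ or $\nabla h$ in the expanded Simons identity is systematically traded for $\bar h$, $\nabla\bar h$ plus bounded perturbation tensors, and that the constants depend only on $|p|_{C^1}$, $|b|_{C^1}$, $|K|_{C^0}$ as claimed — in particular the curvature-derivative terms $\nabla K$ are left \emph{unestimated} on the right-hand side (they reappear in the next stage of the argument combined with the stability inequality), so $|K|_{C^0}$ rather than $|K|_{C^1}$ suffices here, and similarly $\nabla^2 H$ and $\Delta\bar T$ are carried along symbolically. A minor point requiring attention is the distinction between $|\bar A|$ as operator norm and as Hilbert–Schmidt norm when passing from $|A|^2 h_{ij}h_{ij}$-type contractions to $|\bar A|^4$; since the paper uses $|A|^2=h_{ij}h_{ij}$ throughout, the leading term is genuinely $-|\bar A|^4$ with constant $1$, and any slack is dumped into the $c|\bar A|^3$ term. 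I would therefore organize the proof as: (i) split $\Delta\bar h$; (ii) substitute Simons \eqref{ssy 1.20}; (iii) collect the linear-in-curvature-derivative and $\nabla^2 H$ and $\Delta\bar T$ terms into the $\bar h\ast(\cdots)$ bracket; (iv) expand the two quartic $h$-terms via $h=\bar h-\bar T$, identify $-|\bar A|^4$, and bound all remainders by $c(|\bar A|^3+|\nabla\bar A||\bar A|)$ using \eqref{perturbed and original relation}, \eqref{derivatives of T}, \eqref{derivatives of p}, $|H|\le c$, and $|\bar A|\ge 1$.
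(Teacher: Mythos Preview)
Your proposal is correct and follows essentially the same route as the paper: split $\bar h_{ij}\Delta\bar h_{ij}$ into a piece involving $\Delta h_{ij}$ and the $\bar h\ast\Delta\bar T$ piece, substitute the Simons identity \eqref{ssy 1.20}, keep $\nabla K$, $\nabla^2 H$, $\Delta\bar T$ symbolically, and absorb all remaining terms into $-|\bar A|^4 - c(|\bar A|^3+|\nabla\bar A||\bar A|)$ using $|H|\le c$, \eqref{derivatives of T}, \eqref{derivatives of p}, \eqref{perturbed and original relation}, and $|\bar A|\ge 1$. The only cosmetic difference is that the paper first isolates $h_{ij}\Delta h_{ij}$ and then adds $\bar T_{ij}\Delta h_{ij}$, whereas you contract with $\bar h$ from the outset; your observation that the $|\nabla\bar A||\bar A|$ error term is not genuinely produced here (and can be included with coefficient zero) is correct---the paper simply carries it along harmlessly.
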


\begin{proof}
  First, we estimate $H$ and $\nabla H$. The mean curvature $H$ itself is
  easy, and an upper bound follows from the MOTS equation \eqref{mots},
  \begin{equation}
    |H| = | -\ensuremath{\operatorname{tr}}_M p| \leqslant n |p|_{C^0} .
    \label{mean curvature bound}
  \end{equation}
  By \eqref{derivatives of p}, we have
  \begin{equation}
    | \nabla_i H| = | - \nabla_i \ensuremath{\operatorname{tr}}_M p| = |g^{j
    k} \nabla_i p_{j k} | \leqslant c (1 + |A|) . \label{first derivative of
    H}
  \end{equation}
  Then using these estimates in the Simons identity \eqref{ssy 1.20}, we have
  that
\begin{align}
& h_{i j} \Delta h_{i j} \\
\geqslant & - h_{i j} (\nabla_j K_{n + 1, k i k} + \nabla_k K_{n + 1, i j
k}) + h_{i j} \nabla_i \nabla_j H - |A|^4 \\
& - c (1 + |A| + |A|^2 + |A|^3 + | \nabla A| |A|) .
\end{align}
  Now we consider the perturbation,
\begin{align}
& \bar{h}_{i j} \Delta \bar{h}_{i j} \\
= & (h_{i j} + \bar{T}_{i j}) (\Delta h_{i j} + \Delta \bar{T}_{i j})
\\
= & h_{i j} \Delta h_{i j} + \bar{T}_{i j} \Delta h_{i j} + (h_{i j} +
\bar{T}_{i j}) \Delta \bar{T}_{i j} \\
= & h_{i j} \Delta h_{i j} + \bar{T}_{i j} \Delta h_{i j} + \bar{h} \ast
\Delta \bar{T} .
\end{align}
  By combining the two inequalities in the above, we see that
\begin{align}
& \bar{h}_{i j} \Delta \bar{h}_{i j} \\
\geqslant & h_{i j} (- \nabla_j K_{n + 1, k i k} - \nabla_k K_{n + 1, i j
k} + \nabla_i \nabla_j H) - |A|^4 \\
& - c (1 + |A|^3 + | \nabla A| |A|) \\
& + \bar{T}_{i j} (- \nabla_j K_{n + 1, k i k} - \nabla_k K_{n + 1, i j
k} + h_{i j} \nabla_i \nabla_j H) \\
& - c (|A|^2 + |A|^3) + \bar{h} \ast \Delta \bar{T} \\
\geqslant & \bar{h} \ast (\nabla K + \nabla^2 H + \Delta \bar{T}) - |A|^4
- c (1 + |A|^3 + | \nabla A| |A|) .
\end{align}
  We use Cauchy-Schwarz inequality to absorb $|A|$ and $|A|^2$ into $|A|^3$,
  and note that $\bar{h} = h + \bar{T}$, so
  \begin{equation}
    \bar{h}_{i j} \Delta \bar{h}_{i j} \geqslant \bar{h} \ast (\nabla K +
    \nabla^2 H + \Delta \bar{T}) - |A|^4 - c (1 + |A|^3 + | \nabla A| |A|) .
  \end{equation}
  After applying \eqref{perturbed and original relation}, we obtain our
  desired inequality.
\end{proof}

The following Kato type inequality for $\bar{A}$ is standard when applying
Schoen-Simon-Yau's iterative arguments for curvature estimates. For
completeness, we include the proof. The Kato type inequality asserts a lower
bound of $| \nabla_k \bar{h}_{i j} |^2$ in terms of $| \nabla | \bar{A} ||^2$.
Our proof is taken from {\cite{schoen-curvature-1975}}.

\begin{lemma}
  \label{kato}Let $M$ be a MOTS, then we have
  \begin{equation}
    \sum_{i j k} | \nabla_k \bar{h}_{i j} |^2 - | \nabla | \bar{A} ||^2
    \geqslant \tfrac{2}{(1 + \varepsilon) n} | \nabla | \bar{A} ||^2 - c (1 +
    | \bar{A} |^2), \label{kato inequality}
  \end{equation}
  where $c > 0$ depends on $\tfrac{1}{\varepsilon}$, $|K|_{C^0}$, $|p|_{C^1}$
  and $|b|_{C^1}$.
\end{lemma}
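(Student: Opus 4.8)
The plan is to follow the classical Schoen--Simon--Yau route for the Kato inequality, working with the perturbed second fundamental form $\bar{h}$ in place of $h$, and carefully tracking the extra terms produced by the perturbation $\bar{T}$ via Lemma \ref{lemma:derivatives of T} and \eqref{perturbed and original relation}. First I would recall that for a hypersurface the full Codazzi-symmetric part of $\nabla h$ controls $|\nabla|A||$ with the improved constant $\tfrac{2}{n}$ rather than $1$; this is the statement $\sum_{ijk}|\nabla_k h_{ij}|^2 \geqslant (1+\tfrac{2}{n})|\nabla|A||^2$ that holds for a \emph{minimal} hypersurface because then $\nabla_k h_{ij}$ is fully symmetric in all three indices. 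Here $M$ is a MOTS, not minimal, so $\nabla_k h_{ij}$ is symmetric in $i,j$ (always) and differs from being symmetric in $j,k$ only by the Codazzi defect term $K_{jki,n+1}$, which is bounded by $c$. Likewise the trace of $\nabla_k h_{ij}$ in $j,k$ is $\nabla_i H$, which by \eqref{first derivative of H} is bounded by $c(1+|A|)$ rather than vanishing. So the strategy is: prove the algebraic Kato inequality with the sharp constant for a genuinely totally-symmetric trace-free $3$-tensor, then absorb the Codazzi defect and the $\nabla H$ contribution as error terms of the form $c(1+|A|^2)$, using Cauchy--Schwarz with a small parameter (this is where the loss from $\tfrac{2}{n}$ to $\tfrac{2}{(1+\varepsilon)n}$ and the $\tfrac1\varepsilon$-dependence of $c$ enter).

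Concretely the steps are as follows. Step one: decompose $\nabla_k \bar{h}_{ij}$ into its fully symmetric part $E_{ijk}$ and a remainder $F_{ijk}$ supported on the Codazzi defect and on the $\bar{T}$-derivative terms; by the Codazzi equation and Lemma \ref{lemma:derivatives of T}, $|F| \leqslant c(1+|\bar{A}|)$. Step two: for the symmetric tensor $E$, run the standard linear-algebra computation: write $E = E_0 + \text{(trace part)}$ where the trace part involves $\nabla_i \bar{H}$, and for a fully symmetric trace-free $3$-tensor one has $\sum|E_{0,ijk}|^2 \geqslant (1+\tfrac{2}{n})|\nabla|\bar{A}_0||^2 \geqslant (1+\tfrac{2}{n})(|\nabla|\bar{A}||^2 - c(1+|\bar{A}|^2))$, the last inequality coming from relating $|\nabla|\bar{A}||$ to $|\nabla|\bar{A}_0||$ using $\bar{H}$ bounded and $\nabla\bar{H}$ bounded by $c(1+|\bar{A}|)$. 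Step three: combine, using $2\langle E,F\rangle \geqslant -\varepsilon|E|^2 - \tfrac{c}{\varepsilon}|F|^2$ and $|F|^2 \leqslant c(1+|\bar{A}|^2)$, together with $|\nabla|\bar{A}||^2 \leqslant \sum|\nabla_k\bar{h}_{ij}|^2$, to arrive at
\[
\sum_{ijk}|\nabla_k\bar{h}_{ij}|^2 - |\nabla|\bar{A}||^2 \geqslant \tfrac{2}{(1+\varepsilon)n}|\nabla|\bar{A}||^2 - c(1+|\bar{A}|^2),
\]
with $c$ depending on $\tfrac1\varepsilon$, $|K|_{C^0}$, $|p|_{C^1}$, $|b|_{C^1}$ as stated (the $|p|_{C^1}$ dependence enters through $\nabla H = -\nabla\operatorname{tr}_M p$ and the MOTS relation \eqref{mots in spacelike hypersurface}, the $|b|_{C^1}$ through $\bar{T}$).

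The main obstacle I anticipate is purely bookkeeping rather than conceptual: the sharp constant $1+\tfrac{2}{n}$ is classical \emph{only} for a totally symmetric trace-free tensor, so the care is in verifying that after subtracting both the Codazzi defect \emph{and} the full trace part (not just isolating $\nabla|\bar A|$) the leftover genuinely symmetric trace-free piece still dominates $|\nabla|\bar A||$ with the claimed constant — and that all the discarded pieces are honestly $O((1+|\bar A|)^2)$ pointwise, using $|\bar{A}|\geqslant 1$ from \eqref{perturbed and original relation} to absorb the additive constants into $|\bar{A}|^2$. A minor subtlety is that $|\nabla|\bar{A}||$ is only defined where $|\bar{A}|>0$, but $|\bar{A}|\geqslant 1$ everywhere on a MOTS by \eqref{perturbed and original relation}, so this never causes trouble.
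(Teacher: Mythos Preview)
Your proposal follows the same Schoen--Simon--Yau strategy as the paper: treat the Codazzi defect, the perturbation derivatives $\nabla\bar T$, and $\nabla H=-\nabla\operatorname{tr}_M p$ as lower-order errors of size $c(1+|\bar A|)$, then absorb them via Cauchy--Schwarz with parameter $\varepsilon$. The paper organizes the algebra differently from your outline: rather than symmetrizing $\nabla\bar h$ upfront, it diagonalizes $\bar h$ at a point, proves the purely algebraic bound $|\nabla\bar A|^2-|\nabla|\bar A||^2\geqslant\sum_{i\neq j,k}|\nabla_k\bar h_{ij}|^2$ (which needs only symmetry of $\bar h$, not of $\nabla\bar h$), invokes Codazzi at the single step $\sqrt{\sum_{i\neq j}|\nabla_i\bar h_{ij}|^2}\geqslant\sqrt{\sum_{i\neq j}|\nabla_j\bar h_{ii}|^2}-c(1+|A|)$, and finishes with $|\nabla|\bar A||^2\leqslant n\sum_{i\neq j}|\nabla_j\bar h_{ii}|^2+\text{errors}$. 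One caution regarding your step 2: the claimed inequality $|E_0|^2\geqslant(1+\tfrac{2}{n})|\nabla|\bar A_0||^2$ is not a ready-made black box, since $|\nabla|\bar A_0||$ is built from $\nabla\bar h_0$ rather than from the symmetrized tensor $E_0$; to justify it you must reopen the SSY index computation on $E$, at which point your argument coincides with the paper's. So the preliminary symmetrization buys no genuine shortcut, though it introduces no gap either.
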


\begin{proof}
  Let $\mathcal{T}= | \nabla \bar{A} |^2 - | \nabla | \bar{A} ||^2$. We
  compute
\begin{align}
& | \bar{A} |^2 \mathcal{T} \\
= & | \bar{A} |^2 | \nabla \bar{A} |^2 - \tfrac{1}{4} | \nabla | \bar{A}
|^2 |^2 \\
= & \sum_{i, j, k, l, m} (\bar{h}_{i j} \nabla_k \bar{h}_{m l}) - \sum_k
(\sum_{i j} \bar{h}_{i j} \nabla_k \bar{h}_{i j})^2 \\
= & \tfrac{1}{2} \sum_{i, j, k, l, m} (\bar{h}_{i j} \nabla_k \bar{h}_{m
l} - \bar{h}_{m l} \nabla_k \bar{h}_{i j})^2 .
\end{align}
  By choosing basis of $T M$ we can assume that $\bar{h}_{i j}$ is diagonal
  and
  \begin{equation}
    \bar{h}_{i j} = \lambda_i \delta_{i j} . \label{diagonal of h bar}
  \end{equation}
  Using \eqref{diagonal of h bar}, we have
\begin{align}
& \sum_{i, j, k, l, m} (\bar{h}_{i j} \nabla_k \bar{h}_{m l} - \bar{h}_{m
l} \nabla_k \bar{h}_{i j})^2 \\
= & \sum_{i, m, l, k} (\bar{h}_{i i} \nabla_k \bar{h}_{m l} - \bar{h}_{m
l} \nabla_k \bar{h}_{i i})^2 + (\sum_{m, l} \bar{h}_{m l}^2) \sum_{i \neq
j, k} | \nabla_k \bar{h}_{i j} |^2 \\
\geqslant & (\sum_i \bar{h}_{i i}^2) \sum_{m \neq l, k} | \nabla_k
\bar{h}_{m l} |^2 + (\sum_{m, l} \bar{h}_{m l}^2) \sum_{i \neq j, k} |
\nabla_k \bar{h}_{i j} |^2 \\
= & 2 \left( \sum_{m, l} \bar{h}_{m l}^2 \right) \sum_{i \neq j, k} |
\nabla_k \bar{h}_{i j} |^2 \\
= & 2 | \bar{A} |^2 \sum_{i \neq j, k} | \nabla_k \bar{h}_{i j} |^2 .
\end{align}
  Hence,
  \begin{equation}
    \mathcal{T}= | \nabla \bar{A} |^2 - | \nabla | \bar{A} ||^2 \geqslant
    \sum_{i \neq j, k} | \nabla_k \bar{h}_{i j} |^2 .
  \end{equation}
  Observe that the proof of the above inequality only uses symmetry properties
  of $\bar{h}_{i j}$. We estimate $\sum_{i \neq j, k} | \nabla_k \bar{h}_{i j}
  |^2$ as follows:
  \begin{equation}
    \sum_{i \neq j, k} | \nabla_k \bar{h}_{i j} |^2 \geqslant \sum_{i \neq j}
    | \nabla_i \bar{h}_{i j} |^2 + | \nabla_j \bar{h}_{i j} |^2 = 2 \sum_{i
    \neq j} | \nabla_i \bar{h}_{i j} |^2 .
  \end{equation}
  Since
  \begin{equation}
    \sqrt{\sum_{i \neq j} | \nabla_j \bar{h}_{i i} |^2} \leqslant
    \sqrt{\sum_{i \neq j} | \nabla_i \bar{h}_{i j} - \nabla_j \bar{h}_{i i}
    |^2} + \sqrt{\sum_{i \neq j} | \nabla_i \bar{h}_{i j} |^2},
  \end{equation}
  and by Codazzi equation and the definition of $\bar{h}$ on the first term on
  the right hand side, we have
  \begin{equation}
    \sqrt{\sum_{i \neq j} | \nabla_i \bar{h}_{i j} |^2} \geqslant
    \sqrt{\sum_{i \neq j} | \nabla_j \bar{h}_{i i} |^2} - \sqrt{\sum_{i \neq
    j} (K_{n + 1, i j i} + \nabla_i \bar{T}_{i j} - \nabla_j \bar{T}_{i i})^2}
    .
  \end{equation}
  With the bound on $K$ by $|K|_{C^0}$ and \eqref{derivatives of T} in the
  above, we obtain
  \begin{equation}
    \sqrt{\sum_{i \neq j} | \nabla_i \bar{h}_{i j} |^2} \geqslant
    \sqrt{\sum_{i \neq j} | \nabla_j \bar{h}_{i i} |^2} - c (1 + |A|) .
  \end{equation}
  By invoking the elementary fact that the inequality $\sqrt{a} \geqslant
  \sqrt{b} - \sqrt{c}$ implies $a \geqslant \tfrac{b}{1 + \varepsilon} -
  \tfrac{c}{\varepsilon}$, we have
\begin{align}
& \sum_{i \neq j, k} | \nabla_k \bar{h}_{i j} |^2 \\
\geqslant & 2 \sum_{i \neq j} | \nabla_i \bar{h}_{i j} |^2 \\
\geqslant & \tfrac{2}{1 + \varepsilon} \sum_{i \neq j} | \nabla_j
\bar{h}_{i i} |^2 - \tfrac{c}{\varepsilon} (1 + |A|)^2 .
\end{align}
  We then try to bound $\sum_{i \neq j} | \nabla_j \bar{h}_{i i} |$ using $|
  \nabla | \bar{A} ||$. By \eqref{diagonal of h bar},
\begin{align}
& | \nabla | \bar{A} ||^2 \\
= & | \bar{A} |^{- 2} \sum_k (\sum_{j, i} \bar{h}_{i j} \nabla_k
\bar{h}_{i j})^2 \\
= & (\sum_{\ell} \bar{h}_{\ell \ell}^2)^{- 1} \sum_k (\sum_i \bar{h}_{i i}
\nabla_k \bar{h}_{i i})^2 \\
\leqslant & \sum_{i, k} | \nabla_k \bar{h}_{i i} |^2 \\
= & \sum_{i \neq k} | \nabla_k \bar{h}_{i i} |^2 + \sum_i | \nabla_i
\bar{h}_{i i} |^2 \\
= & \sum_{i \neq k} | \nabla_k \bar{h}_{i i} |^2 + \sum_i | \nabla_i h_{i
i} + \nabla_i \bar{T}_{i i} |^2 .
\end{align}
  Because of $H = \sum_i h_{i i}$, the bounds \eqref{first derivative of H}
  and \eqref{derivatives of T},
\begin{align}
& | \nabla | \bar{A} ||^2 \\
= & \sum_{i \neq k} | \nabla_k \bar{h}_{i i} |^2 + \sum_i | \sum_{j \neq
i} - \nabla_i h_{j j} + \nabla_i H + \nabla_i \bar{T}_{i i} |^2
\\
= & \sum_{i \neq k} | \nabla_k \bar{h}_{i i} |^2 + \sum_i | \sum_{j \neq
i} - \nabla_i \bar{h}_{j j} + \nabla_i \bar{T}_{j j} + \nabla_i H +
\nabla_i \bar{T}_{i i} |^2 \\
= & \sum_{i \neq k} | \nabla_k \bar{h}_{i i} |^2 + \sum_i (\sum_{j \neq i}
\nabla_i \bar{h}_{j j})^2 + c | \nabla \bar{A} | |A| + c |A|^2 + c |
\nabla \bar{A} | + c \\
\leqslant & n \sum_{i \neq j} | \nabla_i \bar{h}_{j j} |^2 + c | \nabla
\bar{A} | |A| + c |A|^2 + c | \nabla \bar{A} | + c.
\end{align}
  Therefore,
\begin{align}
& \sum_{i j k} | \nabla_k \bar{h}_{i j} |^2 - | \nabla | \bar{A} ||^2
\\
\geqslant & \sum_{i \neq j, k} | \nabla_k \bar{h}_{i j} |^2 \\
\geqslant & \tfrac{2}{1 + \varepsilon} \sum_{i \neq j} | \nabla_j
\bar{h}_{i i} |^2 - \tfrac{c}{\varepsilon} (1 + |A|)^2 \\
\geqslant & \tfrac{2}{(1 + \varepsilon) n} | \nabla | \bar{A} ||^2 -
\tfrac{c}{\varepsilon} (1 + |A|)^2 - (c| \nabla \bar{A} ||A| + c|A|^2 + c|
\nabla \bar{A} | + c) .
\end{align}
  By absorbing $|A|$ into $|A|^2$, $| \nabla \bar{A} |$ to the left, and
  \eqref{perturbed and original relation}, we obtained the desired inequality.
\end{proof}

\begin{corollary}
  Let $M$ be a MOTS, then
  \begin{equation}
    | \nabla \bar{A} |^2 - | \nabla | \bar{A} ||^2 \geqslant \frac{1}{(1 +
    \varepsilon) n + 1} (| \nabla \bar{A} |^2 + | \nabla | \bar{A} ||^2) - c
    (1 + | \bar{A} |^2), \label{kato variant}
  \end{equation}
  with $c$ depending on the same constants as in Lemma \ref{kato}.
\end{corollary}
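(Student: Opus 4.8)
The plan is to combine the Kato-type inequality \eqref{kato inequality} from Lemma \ref{kato} with the trivial inequality $|\nabla\bar A|^2 \geqslant |\nabla|\bar A||^2$, via a convex combination. Write $P = |\nabla\bar A|^2$ and $Q = |\nabla|\bar A||^2$, so that $P \geqslant Q \geqslant 0$. Lemma \ref{kato} states $P - Q \geqslant \tfrac{2}{(1+\varepsilon)n}\,Q - c(1+|\bar A|^2)$, and the elementary Kato inequality gives $P - Q \geqslant 0$.

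First I would take a weighted average of these two lower bounds for $P-Q$. For any $\lambda \in [0,1]$,
\[
P - Q \;\geqslant\; \lambda\Bigl(\tfrac{2}{(1+\varepsilon)n}\,Q - c(1+|\bar A|^2)\Bigr) + (1-\lambda)\cdot 0 \;=\; \tfrac{2\lambda}{(1+\varepsilon)n}\,Q - \lambda c(1+|\bar A|^2).
\]
The goal is to produce a multiple of $P + Q$ on the right. Adding $Q$ to both sides, $P \geqslant \bigl(1 + \tfrac{2\lambda}{(1+\varepsilon)n}\bigr)Q - \lambda c(1+|\bar A|^2)$, which is not yet symmetric in $P$ and $Q$; instead I would keep $P-Q$ on the left and observe that to bound $P-Q$ below by $\alpha(P+Q)$ it suffices, since $P \geqslant Q$, to choose $\alpha$ small enough that $\alpha(P+Q) \leqslant \alpha\cdot 2Q$ is dominated by the available bound $\tfrac{2\lambda}{(1+\varepsilon)n}Q$ after absorbing the error term. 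Concretely, set $m = (1+\varepsilon)n$ and pick $\lambda$ so that $\tfrac{2\lambda}{m} \cdot \tfrac{1}{1-\lambda}$ (or a similarly optimized quantity) equals $\tfrac{1}{m+1}$; a clean choice is to verify directly that
\[
P - Q \;\geqslant\; \tfrac{1}{m+1}\,(P+Q) - c(1+|\bar A|^2)
\]
by rearranging to $\tfrac{m}{m+1}P - \tfrac{m+2}{m+1}Q \geqslant -c(1+|\bar A|^2)$, i.e. $m P - (m+2)Q \geqslant -c(m+1)(1+|\bar A|^2)$. Using $P \geqslant Q$ we get $mP - (m+2)Q \geqslant mP - (m+2)P = -2P$ is the wrong direction, so instead split as $mP - (m+2)Q = m(P-Q) - 2Q \geqslant m\bigl(\tfrac{2}{m}Q - c(1+|\bar A|^2)\bigr) - 2Q = -mc(1+|\bar A|^2)$, which is exactly the required inequality after renaming the constant $c$.

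The argument is entirely elementary once Lemma \ref{kato} is in hand, so there is no real obstacle; the only point requiring a small amount of care is the bookkeeping of the constant, namely checking that the $Q$ terms cancel exactly as above so that the coefficient $\tfrac{1}{(1+\varepsilon)n+1}$ emerges, and tracking that the new $c$ still depends only on $\tfrac1\varepsilon$, $|K|_{C^0}$, $|p|_{C^1}$ and $|b|_{C^1}$, exactly as in Lemma \ref{kato}. I would present this as a two-line computation.
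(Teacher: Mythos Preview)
Your argument is correct and is exactly the intended one: the paper states the corollary without proof, and the algebraic manipulation you give---writing $m=(1+\varepsilon)n$, rearranging the claim to $mP-(m+2)Q\geqslant -c(m+1)(1+|\bar A|^2)$, and then using $m(P-Q)-2Q\geqslant 2Q-mc(1+|\bar A|^2)-2Q$ via Lemma~\ref{kato}---is precisely the two-line computation the paper expects the reader to supply. The detour through convex combinations and the aborted use of $P\geqslant Q$ are unnecessary; the clean version is simply to note that Lemma~\ref{kato} gives $mP\geqslant (m+2)Q-mc(1+|\bar A|^2)$ directly.
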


\section{Curvature estimates}\label{curvature estimate section}

\subsection{$L^q$ curvature estimate}

\begin{proposition}
  \label{integral curvature estimate}Let $M$ be a stable free boundary MOTS,
  we have that for $q \in [2, 2 + \sqrt{\tfrac{8}{n}})$,
  \begin{equation}
    \int_M \phi^2 | \bar{A} |^{q - 2} | \nabla | \bar{A} ||^2 \leqslant c
    \int_M (\phi^2 + | \nabla \phi |^2) | \bar{A} |^q,
  \end{equation}
  and
  \begin{equation}
    \int \phi^2 | \bar{A} |^{q + 2} \leqslant c \int_M | \bar{A} |^q [| \nabla
    \phi |^2 + \phi^2], \label{cutoff lp estimate}
  \end{equation}
  where the constant $c > 0$ depends only on $|K|_{C^0}$, $|p|_{C^1}$,
  $|b|_{C^1}$, $| \eta |_{C^1}$ and $|d|_{C^2}$.
\end{proposition}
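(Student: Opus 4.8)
The strategy is the classical Schoen–Simon–Yau iteration, adapted to the perturbed second fundamental form $\bar A$ and the free boundary. I would start from the Simons inequality \eqref{simons for bar h} and pair it with the Kato-type inequality of Lemma \ref{kato} (or its variant \eqref{kato variant}) to produce a differential inequality for $|\bar A|$ with controlled error terms. Concretely, testing \eqref{simons for bar h} against $\phi^2|\bar A|^{q-2}$ and integrating by parts, the left-hand side $\int \phi^2|\bar A|^{q-2}\bar h_{ij}\Delta\bar h_{ij}$ produces, after moving derivatives off $\Delta\bar h$, a good term $\int\phi^2|\bar A|^{q-2}(|\nabla\bar A|^2 + (q-2)|\nabla|\bar A||^2)$ together with a boundary term $\int_{\partial M}\phi^2|\bar A|^{q-2}\bar h_{ij}\partial_1\bar h_{ij}$ and cross terms involving $\nabla\phi$. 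The boundary term is where the free-boundary structure is used: since $\bar h_{1i}\equiv 0$ on $\partial M$ (the defining feature of the perturbation), Corollary \ref{boundary derivative bound} gives $|\partial_1|\bar A||\leqslant c|\bar A|$, so the boundary contribution is bounded by $c\int_{\partial M}\phi^2|\bar A|^q$, which in turn is absorbed into interior integrals by \eqref{transfer of boundary integral} at the cost of extra $\int\phi^2|\bar A|^q$ and $\int|\nabla\phi|^2|\bar A|^q$ terms. The terms involving $\nabla K$, $\nabla^2 H$, $\Delta\bar T$ on the right of \eqref{simons for bar h} are handled by \eqref{derivatives of T}, \eqref{derivatives of p}, the bound \eqref{mean curvature bound}, \eqref{first derivative of H} and integration by parts (to trade a derivative for a $\nabla\phi$ or $\nabla|\bar A|$ term, the latter absorbed with Cauchy–Schwarz), leaving only errors of the form $c\int(\phi^2+|\nabla\phi|^2)|\bar A|^q$ and a small multiple of the good term $\int\phi^2|\bar A|^{q-2}|\nabla\bar A|^2$.

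After this bookkeeping one arrives at an inequality of the shape
\[
\Big(\tfrac{2}{(1+\varepsilon)n} + q - 2 - \varepsilon\Big)\int_M \phi^2|\bar A|^{q-2}|\nabla|\bar A||^2 \;\leqslant\; c\int_M(\phi^2+|\nabla\phi|^2)|\bar A|^q \;+\; \varepsilon\int_M\phi^2|\bar A|^{q+2},
\]
where the last term comes from the quartic term $-|\bar A|^4$ in \eqref{simons for bar h} integrated against $\phi^2|\bar A|^{q-2}$. To close this I would feed in Lemma \ref{lp estimate by gradient}, which bounds $\int\phi^2|\bar A|^{q+2}$ by $\tfrac{q^2}{4}(1+\varepsilon)\int\phi^2|\bar A|^{q-2}|\nabla|\bar A||^2$ plus lower-order terms (and a boundary term again removed via \eqref{transfer of boundary integral}). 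Substituting this back, the coefficient of the good term becomes
\[
\tfrac{2}{(1+\varepsilon)n} + q - 2 - \varepsilon\tfrac{q^2}{4}(1+\varepsilon),
\]
which is strictly positive precisely when $q-2 < \sqrt{8/n}$, after sending $\varepsilon\to 0$ (since $2/n + (q-2) - \tfrac{(q-2)^2}{4}\cdot\tfrac{n}{2}\cdot\ldots$; more cleanly, the borderline equation $\tfrac{2}{n} = \tfrac{q^2}{4}\cdot 0$ is not the right bookkeeping — the exponent threshold $2+\sqrt{8/n}$ is exactly the Schoen–Simon–Yau number and drops out of requiring $\tfrac{2}{n}+q-2 > \tfrac{q^2}{4}\cdot\tfrac{?}{}$; in any case the algebra yields positivity iff $q\in[2,2+\sqrt{8/n})$). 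Dividing by this positive constant gives the first displayed inequality of the Proposition, and then one more application of Lemma \ref{lp estimate by gradient} together with \eqref{transfer of boundary integral} converts it into \eqref{cutoff lp estimate}.

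The main obstacle is the careful handling of the boundary integrals throughout the integration by parts: every time a derivative is moved, the divergence theorem on $M$ generates a term on $\partial M$, and one must check that each such term is either zero by the free boundary condition (the vanishing of $\bar h_{1i}$ and the orthogonality $\langle Dd,\eta\rangle = 1$), controlled by Corollary \ref{boundary derivative bound}, or reducible to interior integrals via \eqref{transfer of boundary integral}. In particular the term $\int_{\partial M}\phi^2|\bar A|^{q-2}\bar h_{ij}\nabla_\eta\bar h_{ij}$ arising from the Simons term must be split as $\bar h_{11}\nabla_\eta\bar h_{11} + \sum_{i,j\geqslant 2}\bar h_{ij}\nabla_\eta\bar h_{ij}$ exactly as in Corollary \ref{boundary derivative bound}, and the off-diagonal piece must be shown to contribute nothing or only a controlled amount — this is the one place where the choice of $\bar T$ (rather than, say, the perturbed shear) is essential, as the concluding remark of Section \ref{prelim} explains. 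A secondary bit of care is tracking the $\varepsilon$'s so that all the absorptions into the good term are simultaneously valid with room to spare; this is routine but must be done honestly to confirm the sharp exponent range $q < 2+\sqrt{8/n}$.
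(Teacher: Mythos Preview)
Your overall plan coincides with the paper's proof: multiply the Simons inequality \eqref{simons for bar h} by $\phi^2|\bar A|^{q-2}$, integrate by parts, control boundary terms via Corollary \ref{boundary derivative bound} and \eqref{transfer of boundary integral}, handle the $\nabla K+\nabla^2 H+\Delta\bar T$ block by a further integration by parts, apply the Kato inequality, and then feed in the stability estimate of Lemma \ref{lp estimate by gradient}. The paper routes this through the identity $-|\bar A|\Delta|\bar A|+|\nabla\bar A|^2-|\nabla|\bar A||^2=-\bar h_{ij}\Delta\bar h_{ij}$ before integrating, but after integration by parts this yields exactly the same ``good'' term you describe.

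However, your displayed intermediate inequality has two bookkeeping errors which, taken at face value, would break the argument. First, the good coefficient should be $q-1+\tfrac{2}{(1+\varepsilon)n}$, not $q-2+\tfrac{2}{(1+\varepsilon)n}$: your term $\int\phi^2|\bar A|^{q-2}(|\nabla\bar A|^2+(q-2)|\nabla|\bar A||^2)$ equals $\int\phi^2|\bar A|^{q-2}\big[(|\nabla\bar A|^2-|\nabla|\bar A||^2)+(q-1)|\nabla|\bar A||^2\big]$, and then Kato contributes the extra $\tfrac{2}{(1+\varepsilon)n}$. Second, and more seriously, the quartic term $-|\bar A|^4$ in \eqref{simons for bar h} produces $(1+\varepsilon)\int\phi^2|\bar A|^{q+2}$ on the right, not $\varepsilon\int\phi^2|\bar A|^{q+2}$; if it really carried an $\varepsilon$ you could send $\varepsilon\to 0$ and obtain the estimate for every $q$, which is false. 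With the correct coefficients, substituting Lemma \ref{lp estimate by gradient} gives the comparison
\[
q-1+\tfrac{2}{n}\;>\;\tfrac{q^2}{4}\quad\Longleftrightarrow\quad (q-2)^2<\tfrac{8}{n},
\]
which is precisely the range $q\in[2,2+\sqrt{8/n})$. You evidently sensed the algebra was off (``not the right bookkeeping''); these are the two fixes needed.
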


\begin{proof}
  We have respectively
  \[ \Delta | \bar{A} |^2 = 2 | \bar{A} | \Delta | \bar{A} | + 2 | \nabla |
     \bar{A} ||^2, \]
  and
  \[ \Delta | \bar{A} |^2 = 2 \bar{h}_{i j} \Delta \bar{h}_{i j} + 2 | \nabla
     \bar{A} |^2 . \]
  Subtracting the above two equations give
  \begin{equation}
    - | \bar{A} | \Delta | \bar{A} | + | \nabla \bar{A} |^2 - | \nabla |
    \bar{A} ||^2 = - \bar{h}_{i j} \Delta \bar{h}_{i j} .
  \end{equation}
  Recall the Simons inequality \eqref{simons for bar h}, we have that
\begin{align}
& - | \bar{A} | \Delta | \bar{A} | + | \nabla \bar{A} |^2 - | \nabla |
\bar{A} ||^2 \\
\leqslant & (\nabla K + \nabla^2 H + \Delta \bar{T}) \ast \bar{h} + c (1 +
| \bar{A} |^3 + | \nabla \bar{A} || \bar{A} |) + | \bar{A} |^4 .
\end{align}
  Multiply this equation by $\phi^2 | \bar{A} |^q$ and integrate. This yield
\begin{align}
& \int_M - \phi^2 | \bar{A} |^{q - 1} \Delta | \bar{A} | + \phi^2 |
\bar{A} |^{q - 2} (| \nabla \bar{A} |^2 - | \nabla | \bar{A} ||^2)
\\
\leqslant & \int_M \phi^2 | \bar{A} |^{q + 2} + c \phi^2 (1 + | \bar{A}
|^3 + | \nabla \bar{A} || \bar{A} |) |A|^{q - 2} \\
& + \int_M \phi^2 | \bar{A} |^{q - 2} (\nabla K + \nabla^2 H + \Delta
\bar{T}) \ast \bar{h} .
\end{align}
  Doing an integration by parts involving the Laplacian on the first line and
  $\nabla K + \nabla^2 H + \Delta \bar{T}$ on the last line, performing some
  elementary estimates we find
\begin{align}
& \int_M (q - 1) \phi^2 | \bar{A} |^{q - 2} | \nabla | \bar{A} ||^2 +
\phi^2 | \bar{A} |^{q - 2} (| \nabla \bar{A} |^2 - | \nabla | \bar{A}
||^2) \\
\leqslant & \int_M \phi^2 | \bar{A} |^{q + 2} + c \phi^2 (1 + | \bar{A}
|^3 + | \nabla \bar{A} || \bar{A} |) |A|^{q - 2} \\
& - \int_M 2 \phi | \bar{A} |^{q - 2} \langle \nabla | \bar{A} |, \nabla
\phi \rangle + \int_{\partial M} \phi^2 | \bar{A} |^{q - 1}
\partial_{\eta} | \bar{A} | \\
& + c \int_M \phi | \nabla \phi | U | \bar{A} |^{q - 1} + \phi^2 U |
\bar{A} |^{q - 2} (| \nabla | \bar{A} || + | \nabla \bar{A} |) \\
& + \int_{\partial M} \phi^2 | \bar{A} |^{q - 1} U.
\end{align}
  Here
  \begin{equation}
    U : = |K| + | \nabla H| + | \nabla \bar{T} | \leqslant c | \bar{A} |
    \label{U}
  \end{equation}
  due to \eqref{derivatives of T} and \eqref{first derivative of H}. After
  applying \eqref{boundary derivative bound} to the above, then
  \eqref{transfer of boundary integral}, we obtain,
\begin{align}
& \int_M (q - 1) \phi^2 | \bar{A} |^{q - 2} | \nabla | \bar{A} ||^2 +
\phi^2 | \bar{A} |^{q - 2} (| \nabla \bar{A} |^2 - | \nabla | \bar{A}
||^2) \\
\leqslant & \int_M \phi^2 | \bar{A} |^{q + 2} + c \phi^2 (1 + | \bar{A}
|^3 + | \nabla \bar{A} || \bar{A} |) |A|^{q - 2} \\
& - \int_M 2 \phi | \bar{A} |^{q - 2} \langle \nabla | \bar{A} |, \nabla
\phi \rangle + c \int_M \phi^2 | \bar{A} |^q \\
& + c \int_M \phi | \nabla \phi |  | \bar{A} |^q + \phi^2 | \bar{A} |^{q
- 1} (| \nabla | \bar{A} || + | \nabla \bar{A} |)
\end{align}
  We have the following inequalities for any $s < 2$ and any small
  $\varepsilon > 0$ which follow from an application of the Young's
  inequality, and we apply them \ to the above,
\begin{align}
| \bar{A} |^{q + s} & \leqslant \varepsilon | \bar{A} |^{q + 2} + c
(\varepsilon^{- 1}, s), \\
| \nabla \bar{A} |  | \bar{A} |^{q - 1} & \leqslant \varepsilon | \nabla
\bar{A} |^2 | \bar{A} |^{q - 2} + c (\varepsilon^{- 1}) | \bar{A} |^q,
\\
\phi | \langle \nabla | \bar{A} |, \nabla \phi \rangle | & \leqslant
\varepsilon \phi^2 | \nabla | \bar{A} ||^2 + c (\varepsilon^{- 1}) |
\nabla \phi |^2, \\
\phi | \nabla \phi |  | \bar{A} |^q & \leqslant \varepsilon \phi^2 |
\bar{A} |^{q + 2} + c (\varepsilon^{- 1}) | \nabla \phi |^2 | \bar{A} |^q,
\\
| \nabla | \bar{A} || + | \nabla \bar{A} | & \leqslant \varepsilon (|
\nabla | \bar{A} ||^2 + | \nabla \bar{A} |^2) + c (\varepsilon^{- 1})
\\
& \leqslant \varepsilon (| \nabla | \bar{A} ||^2 + | \nabla \bar{A} |^2)
+ c (\varepsilon^{- 1}) .
\end{align}
  Hence we obtain
\begin{align}
& \int_M (q - 1) \phi^2 | \bar{A} |^{q - 2} | \nabla | \bar{A} ||^2 +
\phi^2 | \bar{A} |^{q - 2} (| \nabla \bar{A} |^2 - | \nabla | \bar{A}
||^2) \\
\leqslant & (1 + \varepsilon) \int_M \phi^2 | \bar{A} |^{q + 2} +
\varepsilon \int_M \phi^2 | \bar{A} |^{q - 2} (| \nabla | \bar{A} ||^2 + |
\nabla \bar{A} |^2) \\
& + c \int_M (| \nabla \phi |^2 + \phi^2) | \bar{A} |^q . \label{estimate
with nabla A bar}
\end{align}
  We use the inequalities \eqref{eq:lp estimate by gradient}, \eqref{kato
  inequality} and \eqref{kato variant}, we obtain
\begin{align}
& \int_M (q - 1 + \frac{2}{(1 + \varepsilon) n} - \varepsilon \frac{1}{(1
+ \varepsilon) n + 1}) \phi^2 | \bar{A} |^{q - 2} | \nabla | \bar{A} ||^2
\\
\leqslant & (1 + \varepsilon) (1 + \varepsilon) \tfrac{q^2}{4} \int_M
\phi^2 | \bar{A} |^{q - 2} | \nabla | \bar{A} ||^2 + c \int_M (| \nabla
\phi |^2 + \phi^2) | \bar{A} |^q .
\end{align}
  Since $q \in [2, 2 + \sqrt{\tfrac{8}{n}})$ ensures that $\tfrac{q^2}{4} < q
  - 1 + \tfrac{2}{n}$, by choosing $\varepsilon$ sufficiently small, we can
  absorb the first term on the right to the left, and we obtain the desired
  inequality.
  
  The second inequality which asserts a bound on $\int \phi^2 | \bar{A} |^{q +
  2}$ follows by combining with Lemma \ref{lp estimate by gradient}.
\end{proof}

\begin{theorem}
  Let $q \in [2, 2 + \sqrt{\tfrac{8}{n}})$, and $M$ be a stable free boundary
  MOTS satisfying the volume bound \eqref{volume growth}, then
  \begin{equation}
    \int_{B (x, r / 4)} | \bar{A} |^{q + 2} \leqslant c r^{n - 2 - q},
    \label{lp estimate in ball}
  \end{equation}
  where the constant $c$ depends on $|\ensuremath{\operatorname{Ric}}|_{C^0}$,
  $|p|_{C^1}$, $|b|_{C^0}$, $|d|_{C^2}$ and the constant $C_M$ in
  \eqref{volume growth}.
\end{theorem}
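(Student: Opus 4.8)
**

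The plan is to combine the scale-invariant local estimate \eqref{cutoff lp estimate} with the volume bound \eqref{volume growth} via a suitable choice of cutoff function. First I would fix $x \in \bar M$ and $0 < r \leqslant r_0$, and pick a cutoff $\phi \in C_c^\infty$ equal to $1$ on $B(x, r/4)$, supported in $B(x, r/2)$, with $|\nabla \phi| \leqslant c/r$; here ``$C_c^\infty$'' should be understood in the free-boundary sense, i.e. $\phi$ need not vanish on $\partial M$, which is exactly what \eqref{cutoff lp estimate} allows since its proof only used compact support away from the ``interior'' boundary of the coordinate patch, not vanishing on $\partial M$. Plugging this $\phi$ into \eqref{cutoff lp estimate} gives
\[
  \int_{B(x, r/4)} |\bar A|^{q+2} \leqslant \int_M \phi^2 |\bar A|^{q+2} \leqslant c \int_M |\bar A|^q (|\nabla \phi|^2 + \phi^2) \leqslant \frac{c}{r^2} \int_{B(x, r/2) \cap M} |\bar A|^q.
\]

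Next I would handle the right-hand side by a Hölder interpolation against the volume bound. Applying Hölder's inequality with exponents $\tfrac{q+2}{q}$ and $\tfrac{q+2}{2}$,
\[
  \int_{B(x, r/2) \cap M} |\bar A|^q \leqslant \left( \int_{B(x, r/2) \cap M} |\bar A|^{q+2} \right)^{\frac{q}{q+2}} \bigl( \operatorname{vol}(B(x, r/2) \cap M) \bigr)^{\frac{2}{q+2}} \leqslant c\, r^{\frac{2n}{q+2}} \left( \int_{B(x, r/2) \cap M} |\bar A|^{q+2} \right)^{\frac{q}{q+2}}.
\]
To close the loop I would re-run the first display with $r$ replaced by $2r$ (or rather set up the two-ball inequality with radii $r/4$ and $r/2$ replaced by a nested family $r/4 < \rho < \sigma < r/2$), so that $\int_{B(x,r/2)} |\bar A|^{q+2}$ is itself controlled; then define $F(\rho) = \int_{B(x,\rho) \cap M} |\bar A|^{q+2}$, and the combination of the two displays reads $F(\rho) \leqslant c (\sigma - \rho)^{-2} \sigma^{\frac{2n}{q+2}} F(\sigma)^{\frac{q}{q+2}}$. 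Since $\tfrac{q}{q+2} < 1$, an absorption (Young's inequality $ab \leqslant \varepsilon a^{(q+2)/q \cdot} + \dots$, or the standard iteration lemma for such ``hole-filling'' inequalities on nested balls) yields $F(\rho) \leqslant c\, \sigma^{n-2-q}$ after summing a geometric series of radii, giving \eqref{lp estimate in ball}.

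The main obstacle I anticipate is the bookkeeping around the cutoff near the free boundary and near the ``center'' $x$ when $x \in \partial M$: one must make sure the geodesic balls $B(x,r)$ used in \eqref{volume growth} and the cutoffs are the same (half-)balls adapted to $\partial N$, that $|\nabla \phi| \leqslant c/r$ survives this, and that \eqref{cutoff lp estimate} genuinely applies with such $\phi$ (it does, since the boundary integrals were already transferred to the interior in \eqref{transfer of boundary integral} and absorbed, so no vanishing condition on $\partial M$ is needed). A secondary point is the exponent arithmetic: one checks $n - 2 - q$ is the correct power by noting that each application of the scale-invariant estimate costs $r^{-2}$ while the volume term contributes $r^{2n/(q+2)}$ per step and the iteration exponent $\tfrac{q}{q+2}$ makes the geometric series of radii converge precisely to the stated power; I would also record that the constant $c$ depends only on the listed quantities because the only non-universal inputs are those in \eqref{cutoff lp estimate} together with $C_M$ from \eqref{volume growth} and $|d|_{C^2}$ through the boundary cutoff. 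No delicate analysis beyond this is needed — it is the standard Schoen–Simon–Yau passage from a reverse-Poincaré-type inequality plus a volume bound to a concrete $L^{q+2}$ decay rate.
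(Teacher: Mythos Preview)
Your proposal is correct, but the route differs from the paper's. The paper avoids any iteration: it first invokes the elementary stability estimate \eqref{initial l2 estimate no boundary} (not \eqref{cutoff lp estimate}) with a standard cutoff and the volume bound to get $\int_{B(x,r)}|\bar A|^2 \leqslant c\, r^{n-2}$; one application of \eqref{cutoff lp estimate} with $q=2$ then upgrades this to $\int_{B(x,r/2)}|\bar A|^4 \leqslant c\, r^{n-4}$; and since $q < 2+\sqrt{8/n} \leqslant 4$, a single H\"older interpolation of $\int|\bar A|^q$ between this $L^4$ bound and the volume, followed by one more use of \eqref{cutoff lp estimate}, yields the stated $L^{q+2}$ bound in three explicit steps. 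Your argument instead interpolates $L^q$ against $L^{q+2}$ on a larger ball, producing the sublinear recursion $F(\rho)\leqslant C(\rho,\sigma)\,F(\sigma)^{q/(q+2)}$; after Young's inequality this becomes $F(\rho)\leqslant \theta F(\sigma)+c(\sigma-\rho)^{-(q+2)}r^n$ with the fixed $\theta=q/(q+2)<1$, and the standard hole-filling iteration lemma closes it. Both are valid: the paper's bootstrap is shorter and needs no iteration lemma, while your version is more self-contained in that it uses only \eqref{cutoff lp estimate} together with the volume bound, never calling separately on the $L^2$ stability inequality.
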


\begin{proof}
  By letting $q = 2$ and $\phi$ be a standard cutoff, that is, $\phi = 0$
  outside $B (x, r)$, $\phi = 1$ in $B (x, r / 2)$, $| \nabla \phi | \leqslant
  \tfrac{c}{r}$. So using $\phi$ in \eqref{initial l2 estimate no boundary},
  \begin{equation}
    \int_{B (x, r / 2)} | \bar{A} |^4 \leqslant c r^{- 2} \int_{B (x, r)} |
    \bar{A} |^2 \leqslant c r^{- 4 + n} . \label{l4 estimate}
  \end{equation}
  From the $L^q$ estimate \eqref{cutoff lp estimate}, we have
  \begin{equation}
    \int_{B (x, r / 4)} | \bar{A} |^{q + 2} \leqslant \tfrac{c}{r^2} \int_{B
    (x, r / 2)} | \bar{A} |^q .
  \end{equation}
  Note that $q < 2 + \sqrt{\tfrac{8}{n}} \leqslant 4$, so from H{\"o}lder
  inequality and the $L^4$ estimate \eqref{l4 estimate},
\begin{align}
& \int_{B (x, r / 4)} | \bar{A} |^{q + 2} \\
\leqslant & \tfrac{c}{r^2} \int_{B (x, r / 2)} | \bar{A} |^q . \\
\leqslant & \tfrac{c}{r^2} \left( \int_{B (x, r / 2)} | \bar{A} |^4
\right)^{\tfrac{q}{4}} |B (x, r / 2) |^{1 - \tfrac{q}{4}} \\
\leqslant & c r^{- 2 - q + n} .
\end{align}
  This is our desired bound.
\end{proof}

Now we prove our main Theorem \ref{main curvature estimate}.

\begin{proof}[Proof of Theorem \ref{main curvature estimate}]
  We employ the iteration method of De Giorgi. Recall the Simons inequality
  \eqref{simons for bar h}, we have that
\begin{align}
& - | \bar{A} | \Delta | \bar{A} | + | \nabla \bar{A} |^2 - | \nabla |
\bar{A} ||^2 \\
\leqslant & (\nabla K + \nabla^2 H + \Delta \bar{T}) \ast \bar{h} + c (1 +
| \bar{A} |^3 + | \nabla \bar{A} || \bar{A} |) + | \bar{A} |^4 .
\end{align}
  Applying \eqref{kato} and Cauchy-Schwarz inequality to absorb the term $|
  \nabla \bar{A} |  | \bar{A} |$, and absorbing $| \bar{A} |^3$ into $|
  \bar{A} |^4$,  we have that
  \begin{equation}
    - \Delta | \bar{A} |^2 + c_1 | \nabla \bar{A} |^2 \leqslant (\nabla K +
    \nabla^2 H + \Delta \bar{T}) \ast \bar{h} + c | \bar{A} |^4 .
  \end{equation}
  Here $c_1$ is a positive constant. We multiply both sides by $\phi$, we have
  that
  \begin{equation}
    - \int_M \phi \Delta | \bar{A} |^2 + c_1 \phi  | \nabla \bar{A} |^2
    \leqslant \int_M \phi (\nabla K + \nabla^2 H + \Delta \bar{T}) \ast
    \bar{h} + c \phi | \bar{A} |^4 .
  \end{equation}
  We use integration by parts on the first term on the right and the bound
  \eqref{U} on $U$ to obtain a bound on the first term on the right:
\begin{align}
& \int_M \phi (\nabla K + \nabla^2 H + \Delta \bar{T}) \ast \bar{h}
\\
= & \int_{\partial M} \phi (K + \nabla H + \nabla \bar{T}) \ast \bar{h}
\ast \eta - \int_M (K + \nabla H + \nabla \bar{T}) \ast (\bar{h} \nabla
\phi + \phi \nabla \bar{h}) \\
\leqslant & c \int_{\partial M} \phi | \bar{A} |^2 + c \int_M | \nabla
\phi |  | \bar{A} |^2 + c \int_M \phi | \bar{A} |  | \nabla \bar{A} | .
\end{align}
  Absorbing $| \nabla \bar{A} |$ using the Cauchy-Schwarz inequality
  \[ | \bar{A} |  | \nabla \bar{A} | \leqslant \varepsilon | \nabla \bar{A}
     |^2 + \tfrac{1}{2 \varepsilon} | \bar{A} |^2, \]
  absorbing $| \bar{A} |^2$ into $| \bar{A} |^4$, we have
  \begin{equation}
    - \int_M \phi \Delta | \bar{A} |^2 \leqslant c \int_{\partial M} \phi |
    \bar{A} |^2 + c \int_M | \nabla \phi |  | \bar{A} |^2 + \phi | \bar{A} |^4
    .
  \end{equation}
  Let $u = | \bar{A} |^2$, $v = \max \{u - k, 0\}$ and replacing $\phi$ by
  $\phi^2 v$ in the above,
  \begin{equation}
    - \int_M \phi^2 v \Delta u \leqslant c \int_{\partial M} \phi^2 v u + c
    \int_M | \nabla (\phi^2 v) | u + \phi^2 v u | \bar{A} |^2 .
  \end{equation}
  We can apply divergence theorem on the first term
  \begin{equation}
    - \int_M \phi^2 v \Delta u = - \int_{\partial M} \phi^2 v \partial_{\eta}
    v + \int_M \phi^2 | \nabla v|^2 + 2 \phi v \langle \nabla \phi, \nabla v
    \rangle .
  \end{equation}
  Note that $| \partial_{\eta} v| \leqslant c u$ due to \eqref{boundary
  derivative bound}, so
\begin{align}
& \int_M \phi^2 | \nabla v|^2 + 2 \phi v \langle \nabla \phi, \nabla v
\rangle + \int_M c_1 \phi^2 v  | \nabla \bar{A} |^2 \\
\leqslant & c \int_{\partial M} \phi^2 v u + c \int_M | \nabla (\phi^2 v)
| u + \phi^2 v u | \bar{A} |^2 .
\end{align}
  For the boundary term $\int_{\partial M} \phi^2 v u$, we use
\begin{align}
& \int_{\partial M} \phi^2 v u \\
= & \int_M \ensuremath{\operatorname{div}}_M (\phi^2 v u \nabla d)
\\
= & \int_M 2 \phi \langle \nabla \phi, \nabla d \rangle v u + \int_M
\phi^2 u \langle \nabla v, \nabla d \rangle + \int_M \phi^2 v \langle
\nabla u, \nabla d \rangle + \phi^2 u v\ensuremath{\operatorname{div}}_M
\nabla d \\
\leqslant & c \int_M \phi | \nabla \phi | v u + \int_M \phi^2 u | \nabla
v| + \int_M \phi^2 v | \nabla u| + \int_M \phi^2 u v \\
\leqslant & c \int_M v^2 | \nabla \phi |^2 + c \int_M \phi^2 u^2 +
\varepsilon \int_M \phi^2 | \nabla v|^2 + c (\varepsilon^{- 1}) \int_M
\phi^2 u^2 \\
& + \varepsilon \int_M \phi^2 | \nabla v|^2 + c (\varepsilon^{- 1})
\int_M \phi^2 u^2 + \int_M \phi^2 u^2 \\
\leqslant & \varepsilon \int_M \phi^2 | \nabla v|^2 + c \int_M v^2 |
\nabla \phi |^2 + c \int_M k^2 \phi^2 + \int_M \phi^2 v^2 .
\end{align}
  And the term $c \int_M | \nabla (\phi^2 v) | u$ is estimated as follows:
\begin{align}
& \int_M | \nabla (\phi^2 v) | u \\
\leqslant & 2 \int_M \phi | \nabla \phi | v u + \int_M \phi^2 | \nabla v|
u \\
\leqslant & c \int_M | \nabla \phi |^2 v^2 + \int_M \phi^2 u^2 +
\varepsilon \int_M \phi^2 | \nabla v|^2 + c (\varepsilon^{- 1}) \int_M
\phi^2 u^2 \\
\leqslant & \varepsilon \int_M \phi^2 | \nabla v|^2 + c \int_M v^2 |
\nabla \phi |^2 + c \int_M k^2 \phi^2 + \int_M \phi^2 v^2 .
\end{align}
  And
  \[ \int_M \phi^2 v u | \bar{A} |^2 \leqslant \int_M \phi^2 v^2 | \bar{A} |^2
     + k \int_M \phi^2 v | \bar{A} |^2 \leqslant \tfrac{3}{2} \int_M \phi^2
     v^2 | \bar{A} |^2 + \tfrac{k^2}{2} \int_M \phi^2 | \bar{A} |^2 . \]
  Using a bound on $| \bar{A} |  \geqslant 1$,
\begin{align}
& \int_M \phi^2 | \nabla v|^2 + 2 \phi v \langle \nabla \phi, \nabla v
\rangle \\
\leqslant & 2 \varepsilon \int_M \phi^2 | \nabla v|^2 + c \int_M v^2 |
\nabla \phi |^2 + c \int_M k^2 | \bar{A} |^2 \phi^2 + \int_M \phi^2 v^2 .
\end{align}
  Letting $\varepsilon = \tfrac{1}{4}$, then we have that
\begin{align}
& \tfrac{1}{2} \int_M \phi^2 | \nabla v|^2 + 2 \phi v \langle \nabla
\phi, \nabla v \rangle \\
\leqslant & c \int_M v^2 | \nabla \phi |^2 + c \int_M k^2 | \bar{A} |^2
\phi^2 + c \int_M \phi^2 v^2 .
\end{align}
  Since
  \begin{equation}
    | \nabla (\phi v) |^2 = \phi^2 | \nabla v|^2 + 2 \phi v \langle \nabla v
    {,} \nabla \phi \rangle + v^2 | \nabla \phi |^2,
  \end{equation}
  and the Cauchy-Schwarz inequality $2 \phi v \langle \nabla \phi, \nabla v
  \rangle \geqslant - \tfrac{1}{4} \phi^2 | \nabla v|^2 + 4 v^2 | \nabla \phi
  |^2$, so
  \begin{equation}
    \int_M | \nabla (\phi v) |^2 \leqslant c \int_M v^2 | \nabla \phi |^2 + c
    \int_M k^2 | \bar{A} |^2 \phi^2 + c \int_M \phi^2 v^2 .
  \end{equation}
  Now the inequality is a good starting point for De Giorgi's iteration
  scheme. One more ingredient we need is the Sobolev inequality (Theorem
  \ref{sobolev}). \ For the details, we refer to the {\cite[Chapter
  4]{han-elliptic-2011}}. We obtain therefore an $L^2$ mean value inequality
  for $u$,
  \begin{equation}
    \sup_{B (x, r / 2)} u \leqslant c r^{- 2} \int_{B (x, r)} u^2
  \end{equation}
  provided that $| \bar{A} |^2 \in L^{q_1} (M)$ where $q_1 > n / 2$. By the
  estimate \eqref{lp estimate in ball}, $|A' | \in L^{q_2}$ for any $4
  \leqslant q_2 < 4 + \sqrt{\tfrac{8}{n}}$. Such $q_1$ exists only when $4 +
  \sqrt{\tfrac{8}{n}} > n$, that is, $n$ is in any dimension from 2 to 5. So
  when $2 \leqslant n \leqslant 5$,
  \begin{equation}
    \sup_{B (x, r)} | \bar{A} | \leqslant \tfrac{c}{r},
  \end{equation}
  finishing our proof the curvature estimate. Considering \eqref{perturbed and
  original relation}, we have the bound \eqref{curvature estimate} for $|A|$.
\end{proof}

\appendix\section{Sobolev inequalities\label{app:Sobolev}}

Recall the following general Sobolev inequality for hypersurfaces.

\begin{theorem}[{\cite{hoffman-sobolev-1974}}]
  \label{sobolev}Assume that $M$ is $n$-dimensional hypersurface in a manifold
  $\tilde{N}$, let $h$ be a nonnegative $C^1$ function on $M$ which vanishes
  on $\partial N$. Then
  \begin{equation}
    \|h\|_{L^{\tfrac{n}{n - 1}} (M)} \leqslant c \int_M | \nabla h| + h |H|,
  \end{equation}
  provided the measure of the support of $h$ is less than a constant $c_0 > 0$
  which depends $n$, the upper bound of the sectional curvature and the
  injective radius of $\tilde{N}$. Here $c$ depends only on $n$.
\end{theorem}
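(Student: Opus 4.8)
This is the Hoffman--Spruck version of the Michael--Simon Sobolev inequality, and the plan is to reduce it to its Euclidean prototype at small scales. It is convenient to pass first to the equivalent isoperimetric statement
\[ |E|^{(n-1)/n}\le c(n)\Big(\operatorname{Per}_M(E)+\int_E|H|\Big),\qquad E\subset M,\ |E|<c_0, \]
from which the stated inequality follows by the coarea formula together with the layer-cake inequality $\|h\|_{L^{n/(n-1)}(M)}\le\int_0^\infty|\{h>t\}|^{(n-1)/n}\,dt$ and the identity $\int_0^\infty\big(\operatorname{Per}_M\{h>t\}+\int_{\{h>t\}}|H|\big)\,dt=\int_M(|\nabla h|+h|H|)$; each superlevel set $\{h>t\}$ has measure at most $|\operatorname{supp}h|<c_0$, so the isoperimetric inequality applies to it. The free boundary is a non-issue: $h\equiv0$ on $\partial N\supset\partial M$, so the superlevel sets avoid $\partial M$ and every integration by parts below is boundary-free (alternatively one extends $M$ and $h$ by reflection across $\partial N$).

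To prove the isoperimetric inequality I would run a first-variation (monotonicity) argument. Fix a scale $r_0$ below $\operatorname{inj}(\tilde N)$ and below a curvature scale, so that in each geodesic ball $B(y,r_0)\subset\tilde N$ the exponential map identifies $B(y,r_0)$ with a Euclidean ball, with the volume form, gradient and second fundamental form of $M\cap B(y,r_0)$ distorted from their flat counterparts by a factor $1+O(r_0^2)$ plus an additive $O(r_0)$ error in the mean curvature coming from the Christoffel symbols. Testing the divergence theorem on $M$ against radial fields $(x-x_0)\,|x-x_0|^{-n}$, cut off at $r_0$, produces for $\rho\le r_0$ a controlled growth law for $\rho\mapsto\rho^{-n}|B_\rho(x_0)\cap M|$ with error governed by $\int|H|$. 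This yields the usual density dichotomy at points of $E$: either $E$ fills a definite fraction of $B_{r_0}(x_0)$, which summed over a Vitali subcover would force $|E|\ge c\,r_0^n$ and so is impossible once $c_0<c\,r_0^n$; or $E$ is confined to a geodesic ball of radius $<r_0$, where the flat Michael--Simon/isoperimetric inequality applies directly after transplanting by the exponential map. A Vitali covering then assembles the local estimates into the global one, with constant depending only on $n$. The flat Michael--Simon inequality itself is the curvature-free, $r_0=\infty$ case of the same monotonicity argument in $\mathbb R^{n+1}$, so it needs nothing new.

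The point I expect to require the most care is the bookkeeping linking the measure threshold $c_0$ to the geometric scale $r_0=r_0(\operatorname{inj}(\tilde N),\ \text{sectional curvature bound})$: one must take $c_0$ a small enough multiple of $r_0^n$ that the density dichotomy always lands in the Euclidean-comparison branch, and one must check that the additive $O(r_0)$ curvature error in the mean-curvature comparison --- which after integration contributes a term $\le c\,r_0\,|E|\le c\,r_0\,c_0^{1/n}\,|E|^{(n-1)/n}$ by H\"older --- is absorbable into the left-hand side after a further shrinking of $c_0$. This is exactly where the smallness of $|\operatorname{supp}h|$, rather than merely of a radius, is used.
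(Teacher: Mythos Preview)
The paper does not supply its own proof of this theorem: it is stated in the appendix as a quotation from Hoffman--Spruck \cite{hoffman-sobolev-1974} and used as a black box in the De~Giorgi iteration. So there is nothing in the paper to compare your proposal against.

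That said, your sketch is a faithful outline of the Hoffman--Spruck argument (or more precisely its isoperimetric reformulation): pass to the isoperimetric inequality via coarea and layer-cake, then localize to a geodesic ball of radius below the injectivity and curvature scales where the metric is $C^2$-close to flat, invoke the Euclidean Michael--Simon monotonicity, and use the smallness of $|\operatorname{supp}h|$ to guarantee that the support fits inside such a ball and that the curvature error $c\,r_0\,|E|$ is absorbable. One caveat: the original Hoffman--Spruck paper actually runs the monotonicity argument directly for the function $h$ (differentiating $\rho^{-n}\int_{B_\rho}h$ rather than $\rho^{-n}|B_\rho\cap E|$) and does not detour through the isoperimetric inequality, though your route is equivalent and arguably cleaner. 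Your remark that the free boundary is a non-issue because $h$ vanishes on $\partial N$ is correct for how the theorem is applied in this paper, but note that the theorem as stated already assumes $h$ vanishes on $\partial N$, so this is part of the hypothesis rather than something to be argued.
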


Going through the same reasoning as in {\cite[Theorem
2.3]{edelen-convexity-2016}}, we obtain the following.

\begin{theorem}
  \label{Sobolev}If $M$ meets $\partial N$ orthogonally and $v \in C^1
  (\bar{M})$ and the measure of the support of $v$ is less than $c_0$ (as in
  as Theorem \ref{sobolev}), then for any $1 \leqslant p < n$,
  \begin{equation}
    \|v\|_{L^{p^{\ast}} (M)} \leqslant c (\| \nabla v\|_{L^p (M)} +\|H
    v\|_{L^p (M)} +\|v\|_{L^p (M)}),
  \end{equation}
  where $c$ is a constant depending only the dimension $n$, the exponent $p$,
  the distance function to $\partial N$, the upper bound of the sectional
  curvature and the injective radius of $\tilde{N}$. The number $p^{\ast}$ is
  the critical Sobolev exponent, when $n > 2$, $p^{\ast} = \tfrac{n p}{n - p}$
  and when $n = 2$, $p^{\ast}$ could be any number greater than 2.
\end{theorem}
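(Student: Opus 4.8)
The plan is to deduce the inequality from the boundaryless Hoffman--Spruck estimate contained in \cite{hoffman-sobolev-1974} (the $\partial N=\emptyset$ case of Theorem~\ref{sobolev}) by doubling $M$ across $\partial N$, and then to pass from the resulting $p=1$ estimate to arbitrary $1\le p<n$ by the standard substitution $v\mapsto |v|^{\gamma}$.

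First I would build the reflection. Using that $d\in C^3$, let $\sigma$ be the involution of a collar of $\partial N$ inside $\tilde N$ that fixes $\partial N$ and reverses the sign of the signed distance $d$ along the $\partial N$-normal geodesics; this $\sigma$ is $C^2$, equals the identity on $\partial N$, and all of its distortion data (the Jacobian of $\sigma|_M$, the metric distortion, and $D^2\sigma$) are bounded in terms of $|d|_{C^2}$ and $|d|_{C^3}$. Since $M$ meets $\partial N$ orthogonally, $\sigma(M)$ is tangent to $M$ to first order along $\partial M$, so
\[ \hat M:=M\cup \sigma(M) \]
is a hypersurface of $\tilde N$ \emph{without boundary}, of class $C^{1,1}$ (the two sheets match $C^{0,1}$ across $\partial M$). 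Extend $v$ to $\hat v$ on $\hat M$ by $\hat v\circ\sigma=v$; then $\hat v\in C^{0,1}(\hat M)$ is compactly supported and the measure of $\operatorname{supp}\hat v$ is at most $C(|d|_{C^1})\cdot\operatorname{meas}(\operatorname{supp}v)$, which is below the threshold of Theorem~\ref{sobolev} once $c_0$ in the present statement is shrunk accordingly.

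Second, I would apply the boundaryless Hoffman--Spruck inequality (valid for Lipschitz functions on hypersurfaces whose generalized mean curvature is locally integrable) to $\hat v$ on $\hat M$:
\[ \|\hat v\|_{L^{n/(n-1)}(\hat M)}\le c\int_{\hat M}|\nabla\hat v|+|\hat H|\,|\hat v|. \]
The point to verify is the bound on the generalized mean curvature of $\hat M$. Testing the first variation of $\hat M$ against a compactly supported field and integrating by parts on each sheet produces, besides the bulk terms $\int_M H\langle X,\nu\rangle$ and its mirror, two boundary integrals over $\partial M$ that cancel \emph{exactly because of the orthogonality condition}; hence $\hat M$ has an $L^{\infty}_{\mathrm{loc}}$ generalized mean curvature with $|\hat H|\le |H|+C(|d|_{C^3})$ on $M$ and, after transporting by $\sigma$, with $|\hat H|\circ\sigma$ controlled by $|H|$ and $|d|_{C^3}$ on $\sigma(M)$. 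Changing variables by $\sigma$ on the second sheet --- whose Jacobian and $\nabla$-distortion are controlled by $|d|_{C^2}$ --- converts $\int_{\hat M}$ into $\int_M$ up to multiplicative and additive constants depending on $|d|$, the curvature bound and the injectivity radius, and yields the $p=1$ case
\[ \|v\|_{L^{n/(n-1)}(M)}\le c\Big(\int_M|\nabla v|+|H|\,|v|+|v|\Big). \]

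Finally, for $1\le p<n$ set $\gamma=\tfrac{p(n-1)}{n-p}$, so that $\tfrac{n}{n-1}\gamma=p^{\ast}$ and $(\gamma-1)\tfrac{p}{p-1}=p^{\ast}$. Applying the $p=1$ inequality to $|v|^{\gamma}$ (after the customary truncation to keep all integrals finite), using $|\nabla|v|^{\gamma}|=\gamma|v|^{\gamma-1}|\nabla v|$, and Hölder's inequality with exponents $p$ and $\tfrac{p}{p-1}$ on each of the three terms gives
\[ \|v\|_{L^{p^{\ast}}(M)}^{\gamma}\le c\big(\|\nabla v\|_{L^p(M)}+\|Hv\|_{L^p(M)}+\|v\|_{L^p(M)}\big)\,\|v\|_{L^{p^{\ast}}(M)}^{\gamma-1}, \]
and dividing through by $\|v\|_{L^{p^{\ast}}(M)}^{\gamma-1}$ (legitimate since $\gamma>1$ for $p>1$, while $p=1$ is the base case) gives the claim; when $n=2$ the boundaryless inequality already delivers any exponent $>2$, and the same argument applies verbatim. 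The main obstacle is the middle step: checking that the doubled hypersurface is regular enough and that its generalized mean curvature stays integrable with the asserted dependence on the ambient data. This is precisely where the free boundary hypothesis is used --- through the cancellation of the ``crease'' contributions along $\partial M$ in the first variation of $\hat M$ --- and where the dependence on $|d|$ (equivalently on $|b|_{C^1}$) is produced.
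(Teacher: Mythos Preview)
Your reflection/doubling argument is a valid route, but it is not the one the paper intends. The paper defers to Edelen's Theorem~2.3, whose method---already visible throughout Section~\ref{stability section} of this paper, e.g.\ in Corollary~\ref{l2 estimate no boundary} and in \eqref{transfer of boundary integral}---is to start from the Michael--Simon/Hoffman--Spruck inequality \emph{with} its boundary term $\int_{\partial M}|v|$ and then absorb that term via the divergence identity
\[
\int_{\partial M}|v|=\int_{\partial M}|v|\,\langle Dd,\eta\rangle=\int_M\operatorname{div}_M(|v|\,\nabla d),
\]
using only the orthogonality $\langle Dd,\eta\rangle=1$ along $\partial M$ and the bound $|\Delta_M d|\le c(1+|H|)$ from \eqref{decomposition hessian}. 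This immediately produces the extra $\|v\|_{L^p}$ on the right, needs only $|d|_{C^2}$, and requires no auxiliary hypersurface. Your approach buys a clean reduction to the boundaryless statement, at the cost of constructing and controlling the reflected sheet (hence the $|d|_{C^3}$ dependence you pick up), justifying Hoffman--Spruck at the $C^{1,1}$/varifold level, and handling the fact that the reflection $\sigma$ is only defined on a collar of $\partial N$---so strictly speaking you must first localize $v$ near and away from $\partial M$ before doubling. The bootstrap from $p=1$ to general $p$ via $|v|^{\gamma}$ is the same in both arguments.
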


\end{document}